\newcommand{\proj}{\tn{P}}
\newcommand{\cont}{\mc{C}}
\newcommand{\partA}{\mc{J}_A}
\newcommand{\partB}{\mc{J}_B}
\newcommand{\partJ}{\mc{J}}
\newcommand{\partk}{\iter{\partJ}{k}}
\newcommand{\piInit}{\pi_{\tn{init}}}
\newcommand{\floor}[1]{\lfloor #1 \rfloor}
\newcommand{\Lebesgue}{\mathcal{L}}
\newcommand{\dom}{\operatorname{dom}} 
\newcommand{\LL}{\mathrm{L}}
\newcommand{\iterA}{\mathcal{F}_A}
\newcommand{\iterB}{\mathcal{F}_B}
\newcommand{\partGeneric}{\mathcal{J}}
\newcommand{\GetWeights}{\textsc{GetWeights}\xspace}
\newcommand{\GetWeightsSub}[1]{\textsc{GetWeights}$_{ \textsc{#1}}$}
\newcommand{\DomDecGPU}{\textsc{DomDec}}
\newcommand{\SinkhornGPU}{\textsc{Sinkhorn}}
\newcommand{\PD}{\mathrm{PD}}
\author{\fnm{Ismael} \sur{Medina}}
\author{\fnm{The Sang} \sur{Nguyen}}
\author*{\fnm{Bernhard} \sur{Schmitzer}$^*$}\email{schmitzer@cs.uni-goettingen.de}
\affil{\orgdiv{Institute of Computer Science}, \orgname{University of Göttingen}, \orgaddress{\street{Goldschmidstraße} 7, \city{Göttingen}, \postcode{37077}, \country{Germany}}}
\date{}                    
\title{Domain decomposition for entropic unbalanced optimal transport}
\begin{document}
	
\maketitle
	
\begin{abstract}
	
Entropic optimal transport has become a popular tool in data analysis and it can be solved efficiently with the celebrated Sinkhorn algorithm, but large scale problems remain challenging. Domain decomposition has been shown to be an efficient strategy on large grids.
Unbalanced optimal transport is a versatile generalization of the standard (balanced) optimal transport problem and its entropic variant can also be solved with a generalized Sinkhorn algorithm.
However, the domain decomposition algorithm cannot be applied directly to the unbalanced problem since independence of the cell problems is lost.
In this article we generalize the domain decomposition algorithm for optimal transport to the unbalanced setting by introducing a new adaptive step size strategy, which allows to ensure the decrement of the global score and prove convergence to the global minimizer.
We also provide an efficient GPU implementation of the new algorithm and demonstrate with experiments that domain decomposition is also an efficient strategy for large unbalanced optimal transport problems. 
\end{abstract}

\section*{Acknowledgements and competing interests}
IM and BS were supported by the Emmy Noether programme of the Deutsche Forschungsgemeinschaft, project number 403056140.
The authors declare no competing interests.

\section{Introduction}

\subsection{Motivation}
\label{sec:motivation}
\paragraph{(Computational) optimal transport and applications} 
Optimal transport (OT) is concerned with finding the most cost-efficient way of transforming one probability measure into another. Let $X$ and $Y$ be compact metric spaces. For a measurable cost function $c: X\times Y \to \R$ and two probability measures $\mu$ and $\nu$ on $X$ and $Y$ the Kantorovich OT problem is given by
\begin{equation}
\label{eq:unregularized-ot}
\inf_{\pi \in \Pi(\mu, \nu)}
\la c, \pi \ra,
\end{equation}
where $\la\cdot, \cdot \ra$ denotes integration of measurable functions against measures and
\begin{equation}
\label{eq:coupling}
	\Pi(\mu, \nu) 
	\assign
	\{
	\gamma \in \measp(X\times Y) 
	\mid 
	\proj_X \pi = \mu, \ 
	\proj_Y \pi = \nu
	\}.
\end{equation}
denotes the set of \emph{couplings} or \emph{transport plans} between $\mu$ and $\nu$. Here $\proj_X$ and $\proj_Y$ are the marginal projection operators that extract the $X$ and $Y$ marginals of $\pi$.
Intuitively, an element of $\Pi(\mu, \nu)$ can be interpreted as a description of how to rearrange the mass of $\mu$ into $\nu$, where (formally) $\pi(x,y)$ describes the infinitesimal amount of mass moving from $x$ to $y$. Note that definition \eqref{eq:coupling} is valid for any pair of non-negative measures $\mu$ and $\nu$ with equal total mass.

Thorough introductions to the topic can be found in \cite{Villani-OptimalTransport-09} and \cite{SantambrogioOT}, and an overview on computational methods can be found in \cite{PeyreCuturiCompOT}.

A common variant of \eqref{eq:unregularized-ot} is the addition of \emph{entropic regularization}:
\begin{equation}
\label{eq:regularized-ot}
\inf_{\pi\in\Pi(\mu, \nu)}
\la c, \pi \ra
+
\veps \KL(\pi \mid \mu \otimes \nu)
\end{equation}
where $\KL$ is the Kullback-Leibler diveregence (see Section \ref{sec:notation} for a full definition) and $\veps > 0$ is a positive \emph{regularization parameter}.
Here $\mu \otimes \nu$ denotes the product measure of $\mu$ and $\nu$ on $X \times Y$.
This modification has several theoretical and computational advantages with respect to \eqref{eq:unregularized-ot}. For instance, the optimization problem becomes strictly convex, and thus amenable to efficient convex optimization solvers. 
In particular, the \emph{Sinkhorn algorithm} \cite{Cuturi2013} has become the de facto standard for solving \eqref{eq:regularized-ot}, among other reasons thanks to its fast convergence \cite{Berman-Sinkhorn-2017} and amenability to GPU parallelization \cite{Cuturi2013,Solomon-siggraph-2015, PeyreCuturiCompOT}.
Entropic regularization also reduces the sample complexity of standard OT (which suffers the curse of dimensionality) down to the parametric rate \cite{sample-complexity-ot, sample-complexity-entropic-ot}, and smooths the gradients of \eqref{eq:regularized-ot} with respect to the problem data, advancing the integration of OT into learning pipelines \cite{feydy_geomloss}.

These advances have facilitated the adoption of optimal transport as a valuable data science tool. Examples are its use in inference on stochastic processes, e.g.~allowing to recover the collective population dynamics from finite samples \cite{JKONet2021} or even reconstructing the underlying drift term \cite{Lavenant2021}.
In biology and medicine, optimal transport has found applications in the study of single-cell genomics \cite{WaddingtonOT,HuiPeyCanOmics22,single-cell-genomics}, histology \cite{OptimalTransportTangent2012}, protein colocalization \cite{TamelingColoc2021}, and dynamic PET imaging \cite{ScScWi19}.

\paragraph{Unbalanced optimal transport} 
One limitation of problems \eqref{eq:unregularized-ot} and \eqref{eq:regularized-ot} is that they can only be applied to positive measures $\mu$ and $\nu$ of equal mass, and that they can be quite susceptible to additive noise. As a remedy, a general framework for \emph{unbalanced} optimal transport between measures of (possibly) different mass has been proposed, see for instance \cite{Frogner2015,CHIZAT20183090,Liero2018} and references therein. In \cite{Liero2018} the constraint that the marginals of $\pi$ must be exactly equal to $\mu$ and $\nu$ is relaxed through \emph{entropy functions} (also known as \emph{divergence functionals}) that penalize the deviations from $\mu$ and $\nu$. The general form of the \emph{unbalanced optimal transport} (UOT) problem becomes
\begin{equation}
\label{eq:unbalanced-ot}
\inf_{\pi \in \measp(X\times Y)}
\la c, \pi \ra
+
D_1(\proj_X \pi \mid \mu)
+
D_2(\proj_Y \pi \mid \nu)
\end{equation}
where $D_1$ and $D_2$ are convex functionals satisfying some properties to be specified in Section \ref{sec:background}. Typically, the terms $D_1(\proj_X \pi \mid \mu)$ and $D_2(\proj_Y \pi \mid \nu)$ encourage the marginals of $\pi$ to be close to $\mu$ and $\nu$, respectively, without enforcing a strict equality constraint.
It is natural to also consider the entropy regularized variant of \eqref{eq:unbalanced-ot}:
\begin{equation}
\label{eq:unbalanced-entropic-ot}
\inf_{\pi \in \measp(X\times Y)}
\la c, \pi \ra
+
\veps \KL(\pi | \mu \otimes \nu)
+
D_1(\proj_X \pi \mid \mu)
+
D_2(\proj_Y \pi \mid \nu).
\end{equation}
It is shown in \cite{ChizatUnbalanced2016} that such problems can be solved with a generalized version of the Sinkhorn algorithm, which preserves its favorable computational properties (see Definition \ref{def:sinkhorn} below).
The favorable properties of unbalanced OT have been applied to fields such as shape registration in biology \cite{uot-bio}, LiDAR imaging \cite{uot-lidar} and the theoretical analysis of neural networks \cite{uot-nn}.

\paragraph{Domain decomposition for optimal transport} 
Despite its general efficiency, the Sinkhorn algorithm struggles to solve large problems with high precision. One way to address this is domain decomposition \cite{BoSch2020}, originally proposed for unregularized optimal transport in \cite{BenamouPolarDomainDecomposition1994}. The strategy of the algorithm is as follows: Let $\partA$ and $\partB$ be two partitions of $X$, and let $\pi^0 \in \Pi(\mu, \nu)$ be an initial feasible coupling. Alternating between $\partA$ and $\partB$, one optimizes the restriction of $\pi$ to each set of the form $X_J\times Y$, where $X_J$ are the subdomains in the current partition, leaving the marginals on each of the restrictions fixed (a more detailed description of the algorithm is given in Section \ref{sec:domdec}). This can be done independently for each subdomain in the partition, so the algorithm can be easily parallelized.

\cite{BenamouPolarDomainDecomposition1994} and \cite{asymptotic_domdec_arxiv} show convergence of domain decomposition for unregularized optimal transport \eqref{eq:unregularized-ot} with the squared Euclidean distance, under different assumptions on the overlap between the partition cells. \cite{BoSch2020} observes that much weaker assumptions are sufficient when adding entropic regularization and shows linear convergence for more general costs, marginals, and partitions.
\cite{BoSch2020} and \cite{hybrid} also describe details of efficient implementations with a multiscale strategy and CPU or GPU parallelization, respectively, and demonstrate that domain decomposition clearly outperforms state-of-the-art implementations of the Sinkhorn algorithm on large problems, in terms of runtime, memory footprint, and solution precision.

\subsection{Outline}
This article generalizes domain decomposition to \emph{unbalanced} entropic optimal transport. The main difficulty compared to the balanced version is that the sub-problems on the partition cells $X_J \times Y$ for \eqref{eq:unbalanced-entropic-ot} are no longer independent of each other, since they interact through the flexible penalty on the $Y$-marginal, and can therefore not naively be solved in parallel.
We show that this issue can be overcome by suitable post-processing of the parallel iterations.

Our analysis is structured as follows: The mathematical assumptions and basic prerequisites on unbalanced entropic optimal transport and domain decomposition are collected in Section \ref{sec:background}.

\paragraph{Domain decomposition for unbalanced optimal transport} Section \ref{sec:unbalanced-domdec} studies the adaptation of domain decomposition to unbalanced entropic optimal transport. Section \ref{sec:formulation-sequential-parallel} describes the main difficulties imposed by the soft marginal constraints and proposes both a sequential and a parallel algorithm, where the latter generates new iterates as convex combinations between the previous iterate and the minimizers of the cell subproblems, controlled by an adaptive step size parameter. Section \ref{sec:properties-cell-subproblem} establishes relevant regularity properties of the cell subproblems. Section \ref{sec:convergence-sequential} shows convergence of the sequential algorithm, i.e. without parallelization.

\paragraph{Parallelization of the domain decomposition iterations} 

In Section \ref{sec:domdec-parallel} we give a sufficient condition on the adaptive step size strategy for convergence of
the \emph{parallel} algorithm.
We show that greedy update rules provide global convergence, as long as their score decrement is not worse than that of a suitable convex combination of previous and new iterates, thus combining efficiency and theoretical robustness.

\paragraph{Implementation details and numerical experiments} 
Section \ref{sec:implementation} describes some details of an efficient implementation, focusing on the $\KL$-divergence as soft-marginal constraint. 
The different parallelization strategies proposed in Section \ref{sec:domdec-parallel} are compared experimentally in Section \ref{sec:single-scale}.
Numerical results on large-scale experiments are presented in Section \ref{sec:multiscale}. While unbalanced domain decomposition is more involved than the balanced variant, we still observe a considerable gain of performance relative to a single global unbalanced Sinkhorn solver on large problems.

\section{Background}
\label{sec:background}

\subsection{Setting and notation}
\label{sec:notation}
\begin{itemize}
	\item Let $X$ and $Y$ be compact metric spaces. 
	We assume compactness to avoid overly technical arguments while 
	covering the numerically relevant setting. 
	\item For a compact metric space $Z$ denote by $\cont(Z)$ the set of continuous functions on $Z$ and by $\meas(Z)$ the set of finite 
	signed Radon measures on $Z$. 
	We identify $\meas(Z)$ with the topological dual of $\cont(Z)$, and we recall that integration against test functions in $\cont(Z)$ induces the weak* topology on $\meas(Z)$, i.e., $(\mu_n)_n$ converges weak* to $\mu$ if, and only if, $\int_Z \phi \diff \mu_n \to \int_Z \phi \diff \mu$ for all continuous $\phi$.
	The subsets of non-negative and probability measures are denoted by 
	$\meas_+(Z)$ and $\meas_1(Z)$, respectively.
	The total variation norm of a measure $\mu\in\meas(Z)$ is denoted by 
	$\|\mu\|_{\meas(Z)}$. 
	One has $\|\mu\|_{\meas(Z)} = \mu(Z)$ for $\mu\in\meas_+(Z)$. 
	We will simply write $\|\mu\|$ for the total variation norm if the underlying 
	space is clear from the context. 
	\item For $\mu, \nu \in \meas(Z)$ we write $\mu \ll \nu$ to indicate that $\mu$ is absolutely continuous with respect to $\nu$. 
	\item For $\mu\in\meas_+(Z)$ we denote by $L^1(Z,\mu)$ and $L^\infty(Z,\mu)$ 
	the spaces of $\mu$-integrable and essentially bounded functions on $Z$,
	respectively. 
	The corresponding norms are denoted by 
	$\|\cdot\|_{L^1(Z,\mu)}$ and $\|\cdot\|_{L^\infty(Z,\mu)}$, but we 
	often merely write $\|\cdot\|_{1, \mu}$ and 
	$\|\cdot\|_{\infty, \mu}$ for simplicity.
	\item We denote by $\la \phi, \rho \ra$ the integration of a measurable function $\phi$ against a measure $\rho$. 
	\item For $\mu\in\meas_+(Z)$ and a measurable set $S\subseteq Z$ the restriction 
	of $\mu$ to $S$ is denoted by $\mu\restr S$. 
	\item The maps $\proj_X\colon \meas_+(X\times Y)\to\meas_+(X)$ and 
	$\proj_Y\colon \meas_+(X\times Y)\to\meas_+(Y)$ denote the projections of 
	measures on $X\times Y$ to their marginals, i.e. 
	\[
	(\proj_X\pi)(A) := \pi(A\times Y) 
	\hspace{0.5cm} \text{and} \hspace{0.5cm}
	(\proj_Y\pi)(B) := \pi(X\times B)
	\]
	for all measurable $A\subseteq X$ and $B\subseteq Y$, 
	$\pi\in\meas_+(X\times Y)$. 
	\item For (measurable) functions 
	$\alpha\colon X\to\R\cup\{\infty\}, \beta\colon Y\to\R\cup\{\infty\}$, 
	the function$\alpha\oplus\beta\colon X\times Y\to\R\cup\{\infty\}$
	is defined
	by
	\[
	(\alpha\oplus\beta)(x,y) := \alpha(x) + \beta(y).
	\hspace{0.5cm} \text{} \hspace{0.5cm}
		\]
		For two measures $\mu\in\meas_+(X)$ and $\nu\in\meas_+(Y)$, their product 
	measure is denoted by $\mu\otimes\nu\in\meas_+(X\times Y)$.
	
	\item We denote by $f^*$ the convex conjugate of a real-valued function $f$.
	\item For two non-negative measures $\mu, \nu \in \meas_+(Z)$, we define the Kullback-Leibler divergence as 
\begin{align}
\label{eq:KL}
\KL(\mu|\nu) \assign 
\int_Z
\left[\RadNikD{\mu}{\nu} \log \left(\RadNikD{\mu}{\nu}\right)
-
\RadNikD{\mu}{\nu} 
+ 
1\right] \diff \nu
\end{align}
whenever $\mu \ll \nu$, and $+\infty$ otherwise.
\end{itemize}

\subsection{Entropic unbalanced optimal transport}
\label{sec:background-ot}
In this section we provide some additional technical details and background on entropic unbalanced optimal transport, building on the introduction in Section \ref{sec:motivation}.

The precise notion of entropy functions that we consider for \eqref{eq:unbalanced-ot} is given by the following definition, adapted from \cite{Liero2018}.
\begin{definition}[Entropy function and divergence]
	\label{def:divergence}
	A function $\varphi\colon \R \to \R\cup\{\infty\}$ is called
	\emph{entropy function} if it is lower semicontinuous, convex, 
	$\dom(\varphi) \subseteq [0,\infty)$, and the set $(\dom\varphi) \backslash \{0\}$ is non-empty.
	
	The speed of growth of $\varphi$ at $\infty$ is described by 
	$
	\varphi'_\infty 
	:= \lim_{x\nearrow +\infty} \frac{\varphi(x)}{x}.
	$
	If $\varphi'_\infty=\infty$, $\varphi$ is said to be \emph{superlinear}
	as it grows faster than any linear function. 
	
	Let $\varphi$ be an entropy function. For $\rho, \sigma\in\meas(Z)$, let 
	$\rho=\frac{\diff\rho}{\diff\sigma}\sigma + \rho^{\perp}$ be the Lebesgue 
	decomposition of $\rho$ with respect to $\sigma$. 
	Then the \emph{($\varphi$-) divergence} of $\rho$ with respect to $\sigma$ is defined
	as:
	\[
	D_\varphi(\rho \mid \sigma) 
	:= \int_Z \varphi\left(\frac{\diff\rho}{\diff\sigma}\right) \diff\sigma
	+ \varphi'_\infty\rho^{\perp}(Z).
	\]
\end{definition}

Divergence functionals enjoy convenient regularity properties:

\begin{proposition}[\cite{ChizatUnbalanced2016}, Proposition 2.3]\label{lsc}
	Let $\varphi$ be an entropy function. Then the divergence functional 
	$D_\varphi$ is convex and weakly* lower-semicontinuous in $(\rho,\sigma)$.
\end{proposition}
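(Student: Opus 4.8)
The plan is to represent $D_\varphi$ as a supremum of jointly affine, weakly* continuous functionals of $(\rho,\sigma)$, and then invoke the stability of convexity and lower semicontinuity under suprema. First I would introduce the Legendre--Fenchel conjugate $\varphi^*(s) := \sup_{t \ge 0}(s\,t - \varphi(t))$, recording that $\varphi^*$ is convex, nondecreasing (because $\dom\varphi \subseteq [0,\infty)$), and that its effective domain is bounded above exactly by the recession slope, $\sup \dom\varphi^* = \varphi'_\infty$. On any half-line $(-\infty, M]$ with $M < \varphi'_\infty$, the conjugate $\varphi^*$ is finite and continuous.

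The key step is the duality formula
\[
D_\varphi(\rho \mid \sigma) = \sup\left\{ \inner{p}{\rho} - \inner{\varphi^*\circ p}{\sigma} : p \in \cont(Z),\ \sup_Z p < \varphi'_\infty \right\}.
\]
I would prove the ``$\ge$'' inequality by the pointwise Fenchel--Young inequality $s\,u \le \varphi(u) + \varphi^*(s)$: testing with an admissible $p$ and splitting $\rho$ via its Lebesgue decomposition $\rho = \RadNikD{\rho}{\sigma}\,\sigma + \rho^\perp$, the absolutely continuous part is controlled by Fenchel--Young against $\sigma$, while on the singular part one uses $p \le \varphi'_\infty$ so that $\inner{p}{\rho^\perp} \le \varphi'_\infty\,\rho^\perp(Z)$. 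The reverse ``$\le$'' inequality is the substantive direction: I would construct near-optimal continuous test functions, choosing $p$ close to a measurable selection of the subdifferential $\partial\varphi$ at the density $\RadNikD{\rho}{\sigma}$ on the absolutely continuous part, and letting $p \nearrow \varphi'_\infty$ on a neighbourhood of the support of $\rho^\perp$; a Lusin-type approximation together with a partition of unity then yields continuous admissible $p$ whose value approaches $D_\varphi(\rho \mid \sigma)$. (This coincides with the classical theory of convex integral functionals of measures.)

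Granting the representation, the conclusion is immediate. For each fixed admissible $p$, the map $(\rho,\sigma) \mapsto \inner{p}{\rho} - \inner{\varphi^*\circ p}{\sigma}$ is affine --- hence convex --- and, since both $p$ and $\varphi^*\circ p$ lie in $\cont(Z)$, it is weakly* continuous by the very definition of the weak* topology on $\meas(Z)$. A pointwise supremum of convex functions is convex, and a pointwise supremum of weakly* continuous (in particular weakly* lower semicontinuous) functions is weakly* lower semicontinuous; applying both statements to the family indexed by admissible $p$ gives convexity and weak* lower semicontinuity of $D_\varphi$ jointly in $(\rho,\sigma)$.

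I expect the main obstacle to be the ``$\le$'' half of the duality formula, namely exhibiting continuous test functions that simultaneously recover the integral term against $\sigma$ and the recession term $\varphi'_\infty\,\rho^\perp(Z)$ against the mutually singular part. The measurable-selection and approximation argument needed there is where the analytic care is required, whereas once the representation is in hand the semicontinuity and convexity are purely formal.
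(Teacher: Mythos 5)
Your argument is correct and is essentially the proof behind the cited result: the paper itself does not prove Proposition~\ref{lsc} but imports it from \cite{ChizatUnbalanced2016}, where (following the classical theory of convex integral functionals on measures, e.g.\ Liero--Mielke--Savar\'e and Bouchitt\'e--Buttazzo) the divergence is likewise written as a supremum of jointly affine, weakly* continuous functionals $(\rho,\sigma)\mapsto \langle p,\rho\rangle - \langle \varphi^*\circ p,\sigma\rangle$ over continuous $p$ with $\sup p<\varphi'_\infty$, and convexity and weak* lower semicontinuity follow formally. Your identification of the ``$\le$'' half of the representation (recovering both the integral term and the recession term $\varphi'_\infty\rho^\perp(Z)$ by near-optimal continuous test functions) as the only substantive step is accurate, and the measurable-selection/Lusin construction you sketch is the standard way it is handled in those references.
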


\begin{example}
	The convex indicator function $\iota_{\{=\}}(\rho\mid\sigma)$ (returning 0 if $\rho$ equals $\sigma$ and $+\infty$ otherwise) is a divergence functional, associated to the convex indicator function of the real set $\{1\}$.
	Likewise, the KL divergence \ref{eq:KL} is the divergence functional associated to the entropy function $\varphi_{\KL} \assign s \log s - s + 1$. Its convex conjugate is given by $\varphi_{\KL}^*(z) = \exp(z)-1$.
\end{example}
Note that by selecting $D_1 = D_2 = \iota_{\{=\}}$ in \eqref{eq:unbalanced-ot} we recover the original Kantorovich problem \eqref{eq:unregularized-ot}.

In preparation of our subsequent analysis, in the following formulation of the unbalanced entropic problem below we add an auxiliary \emph{background} measure $\nu_{-}$ in the definition of the second marginal penalty.
This background measure should be interpreted as a contribution to the target measure by a different part of the transport plan that we do not aim to (or cannot) optimize due to the domain decomposition, but that needs to be accounted for in the overall marginal penalty.

\begin{definition}[Unbalanced entropic optimal transport with background measure $\nu_{-}$]
	\label{def:unbalanced-entropic-transport}
	Given some entropy functions $\varphi_1: X \rightarrow \R_+$, $\varphi_2: Y \rightarrow \R_+$ (and their associated divergences $D_1$ and $D_2$), measures $\mu\in \measp(X)$, $\nu, \nu_{-} \in \measp(Y)$, a cost function $c$, and a regularization strength $\veps$, the \emph{primal} unbalanced entropic transport problem is given by:
	\begin{equation}
	\label{eq:unbalanced-problem}
	\begin{aligned}
	\inf_{\pi \in \measp(X \times Y)} 
	E(\pi\mid \nu_{-}),
	\quad
	\tn{ with }&
	E(\pi\mid\nu_{-})
	=
	\la c, \pi \ra 
	+
	\varepsilon \mathrm{KL}(\pi \mid \mu\otimes\nu)
	\\
	&\ \ +
	D_1\left(\mathrm{P}_X \pi \mid \mu\right)
	+
	D_2\left(\mathrm{P}_Y \pi + \nu_- \mid \nu\right) .
	\end{aligned}
	\end{equation}
	The corresponding \emph{dual} problem reads:
	\begin{equation}	
	\label{eq:dual-problem}
	\begin{aligned}
	\sup _{\substack{\alpha \in \cont(X) \\ \beta \in \cont(Y)}}
	D(\alpha, \beta \mid \nu_{-}),
	\quad 
	\tn{with }
	D(\alpha, &\beta \mid \nu_{-})
	=
	\varepsilon \int_{X \times Y} 1 - \exp \left(\tfrac{\alpha \oplus \beta - c}{\varepsilon}\right) \mathrm{d} (\mu\otimes\nu)
	\\
	&\ \ 
	-\int_X \varphi_1^*(-\alpha) \mathrm{d} \mu
	-\int_Y \varphi_2^*(-\beta) \mathrm{d} \nu
	- \la \beta , \nu_{-} \ra.
	\end{aligned}
	\end{equation}
\end{definition}

The following theorem establishes first the existence of minimizers of the primal problem \eqref{eq:unbalanced-problem}. Second, assuming the existence of maximizers of the dual problem \eqref{eq:dual-problem}, it shows how primal and dual solutions can be inferred from each other via primal-dual optimality conditions. This is particularly useful for efficient algorithmic treatment.
\begin{theorem}[\protect{\cite[Theorem 3.2 and Proposition 3.6]{ChizatUnbalanced2016}} Solutions to entropic UOT]
	\label{theorem:existence-global-solution}
	Assume that problem \eqref{eq:unbalanced-problem} is feasible (i.e.~the infimal value is finite). Then,
	\begin{enumerate}
		\item \label{item:existence-global-primal} there exists a unique minimizer $\pi^\dagger$ for the primal problem \eqref{eq:unbalanced-problem},
		
		\item \label{item:existence-global-dual} $(\alpha^\dagger, \beta^\dagger)$ are dual optimizers for  \eqref{eq:dual-problem} if and only if
	\end{enumerate}	
		\begin{equation}
		\nonumber
		\pi^\dagger
		=
		\exp \left(
		\frac{\alpha^\dagger \oplus \beta^\dagger - c}{\veps} 
		\right)\mu \otimes \nu
		\quad 
		\tn{ and }
		\quad
		\begin{cases}
		-\alpha^\dagger(x) \in \partial \varphi_1\left(
		\RadNik{\proj_X \pi^\dagger}{ \mu}(x)
		\right) &
		\tn{$\mu$-a.e.},
		\\
		-\beta^\dagger(y) \in \partial \varphi_2\left(
		\RadNik{(\proj_Y \pi^\dagger+\nu_{-})}{ \nu}(y)
		\right) &
		\tn{$\nu$-a.e.}
		\end{cases}
		\end{equation}
\end{theorem}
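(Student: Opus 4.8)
The plan is to treat \eqref{eq:unbalanced-problem} by the direct method of the calculus of variations for the primal assertion and by Fenchel--Rockafellar duality for the primal--dual characterization, working throughout in the pairing between $\meas(X\times Y)$ and $\cont(X\times Y)$. For item \ref{item:existence-global-primal}, I would first observe that $E(\cdot\mid\nu_-)$ is strictly convex on its effective domain: the entropic term $\pi\mapsto\veps\KL(\pi\mid\mu\otimes\nu)$ is strictly convex because $s\mapsto s\log s - s + 1$ is, while $\la c,\pi\ra$ is linear and $D_1,D_2$ are convex by Proposition \ref{lsc}; hence a minimizer, if it exists, is unique. For existence I would pass to a minimizing sequence $(\pi_n)$. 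Since $c\geq 0$ and the two divergences are non-negative, a bound on $E(\pi_n\mid\nu_-)$ bounds $\KL(\pi_n\mid\mu\otimes\nu)$, and superlinearity of $\varphi_{\KL}$ then bounds the masses $\|\pi_n\|$; as $X\times Y$ is compact, a subsequence converges weak-$*$ to some $\pi^\dagger$. Joint weak-$*$ lower semicontinuity of every term — lower semicontinuity of $c$, and Proposition \ref{lsc} for $\KL$, $D_1$ and $D_2$ together with weak-$*$ continuity of $\pi\mapsto(\proj_X\pi,\proj_Y\pi+\nu_-)$ — yields that $\pi^\dagger$ attains the infimum.

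For item \ref{item:existence-global-dual}, I would write the primal as $\inf_\pi F(\pi)+G(\Lambda\pi)$ with the marginal operator $\Lambda\pi=(\proj_X\pi,\proj_Y\pi)$, whose adjoint is $\Lambda^*(\alpha,\beta)=\alpha\oplus\beta$, and with $F(\pi)=\la c,\pi\ra+\veps\KL(\pi\mid\mu\otimes\nu)$ and $G(\rho,\sigma)=D_1(\rho\mid\mu)+D_2(\sigma+\nu_-\mid\nu)$. A pointwise maximization gives $F^*(\xi)=\veps\int\left[\exp\!\left(\tfrac{\xi-c}{\veps}\right)-1\right]\mathrm{d}(\mu\otimes\nu)$, and the Lebesgue decomposition gives $G^*(\alpha,\beta)=\int\varphi_1^*(\alpha)\,\mathrm{d}\mu+\int\varphi_2^*(\beta)\,\mathrm{d}\nu-\la\beta,\nu_-\ra$, where the shift $-\la\beta,\nu_-\ra$ records the background measure. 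Assembling $-F^*(\Lambda^*(\alpha,\beta))-G^*(-\alpha,-\beta)$ then reproduces $D(\alpha,\beta\mid\nu_-)$ of \eqref{eq:dual-problem}. To obtain a zero duality gap and attainment in the dual I would invoke Fenchel--Rockafellar, checking its qualification through continuity of $G$ at a marginal pair $\Lambda\bar\pi$ in the range of $\Lambda$, which holds for a $\bar\pi$ whose marginal densities are bounded away from $0$ and $\infty$.

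The characterization is then read off the Fenchel optimality relations $\Lambda^*(\alpha^\dagger,\beta^\dagger)\in\partial F(\pi^\dagger)$ and $-(\alpha^\dagger,\beta^\dagger)\in\partial G(\Lambda\pi^\dagger)$, which hold precisely when the gap closes. The first, since $F$ is smooth and strictly convex, reads $\alpha^\dagger\oplus\beta^\dagger=c+\veps\log\RadNik{\pi^\dagger}{\mu\otimes\nu}$, i.e.\ the stated exponential form. The second splits along the two factors into $-\alpha^\dagger\in\partial\varphi_1$ and $-\beta^\dagger\in\partial\varphi_2$ evaluated at the marginal densities $\RadNik{\proj_X\pi^\dagger}{\mu}$ and $\RadNik{(\proj_Y\pi^\dagger+\nu_-)}{\nu}$, via the reciprocity $-\alpha\in\partial\varphi(s)\iff s\in\partial\varphi^*(-\alpha)$; reading this equivalence forwards and backwards delivers the ``if and only if''.

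I expect the \emph{main obstacle} to be the infinite-dimensional duality: verifying the constraint qualification in the $(\cont,\meas)$ pairing and, above all, correctly accounting for the singular part $\rho^\perp$ in the computation of $G^*$, since finiteness of $\int\varphi_i^*$ silently imposes that the potentials stay below the recession slopes $(\varphi_i)'_\infty$. Establishing genuine dual \emph{attainment} rather than merely the absence of a gap then relies on this growth control together with the compactness supplied by the $\KL$ term.
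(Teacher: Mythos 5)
Your overall architecture is the natural one and is essentially what the cited source does (the paper itself gives no proof of this theorem; it imports it from \cite{ChizatUnbalanced2016}): the direct method for item \ref{item:existence-global-primal} and Fenchel--Rockafellar duality for item \ref{item:existence-global-dual}. Your existence/uniqueness argument is sound (superlinearity of $\varphi_{\KL}$ bounds the masses, Banach--Alaoglu on the compact space gives a weak* cluster point, Proposition \ref{lsc} and lower semicontinuity of $c$ give lower semicontinuity, strict convexity of the entropic term gives uniqueness under the feasibility assumption), and your computations of $F^*$, $G^*$ and the subdifferential relations reproduce \eqref{eq:dual-problem} and the stated primal--dual conditions correctly.

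The genuine gap is the constraint-qualification step for strong duality. You apply Fenchel--Rockafellar with the measure problem as primal, $V=\meas(X\times Y)$, $W=\meas(X)\times\meas(Y)$, and ask for continuity of $G(\rho,\sigma)=D_1(\rho\mid\mu)+D_2(\sigma+\nu_-\mid\nu)$ at some $\Lambda\bar\pi$. Continuity here means continuity in the total-variation norm on $W$, and for a superlinear entropy --- in particular $\varphi_{\KL}$, the paper's flagship case and one allowed by Definition \ref{def:divergence} --- $D_\varphi(\cdot\mid\mu)$ is nowhere TV-continuous: an arbitrarily small perturbation that is singular with respect to $\mu$ sends the value to $+\infty$ because $\varphi'_\infty=\infty$, regardless of how nice the densities of $\bar\pi$ are. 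Moreover, even where such a qualification held, Fenchel--Rockafellar in this pairing would deliver a dual maximizer in $W^*=(\meas(X)\times\meas(Y))^*$, which is strictly larger than $\cont(X)\times\cont(Y)$, so it would not by itself produce continuous potentials; note also that the theorem does not claim dual attainment (the paper explicitly defers existence of dual maximizers to another reference), so you are trying to prove more than is needed via a step that fails. What the ``only if'' direction actually requires is only a zero duality gap between \eqref{eq:unbalanced-problem} and \eqref{eq:dual-problem}; the standard repair is to run the duality from the other side, taking the problem over $\cont(X)\times\cont(Y)$ as the Fenchel--Rockafellar primal and the measure problem as its dual, where the qualification is sup-norm continuity of $\alpha\mapsto\int\varphi_1^*(-\alpha)\,\mathrm{d}\mu$ (and the analogous terms) at, say, suitable constant potentials --- immediate when $\varphi_i^*$ is finite and continuous there --- yielding $\inf = \sup$ together with the primal attainment you already have. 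With the gap closed, your forwards/backwards reading of the subdifferential relations (and direct verification plus weak duality for the ``if'' direction) gives the equivalence; as written, however, the qualification argument breaks down precisely in the $\KL$ case the paper cares about.
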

The feasibility condition is necessary for uniqueness of minimizers (otherwise any candidate is optimal). This detail was omitted in \cite[Theorem 3.2]{ChizatUnbalanced2016}.
Sufficient conditions for the existence of dual maximizers are given in \cite[Theorem 4.11]{multimarginal-uot}.

\begin{definition}[Sinkhorn algorithm for unbalanced entropic optimal transport]
	\label{def:sinkhorn}
	The Sinkhorn algorithm \cite{Cuturi2013,ChizatUnbalanced2016} is an alternate maximization scheme for the duals $\alpha$ and $\beta$ in \eqref{eq:dual-problem}. 
	For a suitable initialization for $\beta^0$ (e.g. $\beta^0 \equiv 0$), the Sinkhorn iterations are defined for each $k = 1,2,...$ as:
	
	\begin{equation}
	\label{eq:sinkhorn-iters-uot}
	\begin{gathered}
	\alpha^{k} 
	\in
	\argmax_{\alpha: X\to \R}
	D(\alpha, \beta^{k-1} \mid \nu_{-})
	,\quad
	\beta^{k} 
	\in
	\argmax_{\beta: Y \to \R}
	D(\alpha^{k}, \beta \mid \nu_{-})
	\\
	\pi^{k} = \exp\left(
	\frac{\alpha^{k} \oplus \beta^{k} - c}{\veps}
	\right) \mu \otimes \nu.
	\end{gathered}
	\end{equation}
	
	Convergence of the Sinkhorn iterates $(\pi^k)_k$ to the unique primal optimizer $\pi^\dagger$ is shown in \cite[Theorem 4.1]{unbalanced-chizat} for discrete marginals, and in \cite[Theorem 4.11]{multimarginal-uot} for arbitrary, compactly supported marginals (but choosing $\nu_{-} = 0$), both assuming the initialization is feasible.
	For several divergence functional of interest, the maximizers in each step of \eqref{eq:sinkhorn-iters-uot} can be given in closed form, see \cite[Table 1]{ChizatUnbalanced2016}. For example, choosing $D_1 = D_2 = \lambda \KL$, $\nu_{-}=0$, the iterations in \eqref{eq:sinkhorn-iters-uot} read:
	\begin{equation}
	\label{eq:sinkhorn-iters-ot}
	\begin{gathered}
	\alpha^{k}(x) 
	= - \frac{\veps}{1+\veps/\lambda}
	\left[
	\log
	\int_Y
	\exp
	\left(\frac{
		\beta^{k-1}(y) - c(x,y)
	}{\varepsilon}\right)
	\diff \nu(y)
	\right],
	\\
	\quad
	\beta^{k}(y) = 
	- \frac{\veps}{1+\veps/\lambda}
	\left[
	\log
	\int_X
	\exp\left(\frac{
		\alpha^{k}(x) - c(x,y)
	}{\varepsilon}\right)
	\diff \mu(x)
	\right].
	\end{gathered}
	\end{equation}
	and the limit as $\lambda \to \infty$ yields the iterates for the balanced case, i.e. $D_1 = D_2 = \iota_{\{=\}}$.
\end{definition}

When no closed forms are available, a nested maximization problem needs to be solved. Nevertheless, it can be shown that the maximization problem decomposes into a set of independent, one-dimensional convex optimization problems \cite{unbalanced-chizat}, which can be addressed efficiently. 
We will treat this case in the implementation section (Section \ref{sec:implementation}).
For now, we will work under the assumption that an efficient method exists for computing optimizers for \eqref{eq:unbalanced-problem}-\eqref{eq:dual-problem}.

\subsection{Domain decomposition for optimal transport}
\label{sec:domdec}

Domain decomposition for optimal transport was introduced in \cite{BenamouPolarDomainDecomposition1994} for unregularized transport with the squared Euclidean distance cost and in \cite{BoSch2020} for entropic transport for general costs.
It addresses the scalability issues of Sinkhorn-based algorithms \cite{SchmitzerScaling2019,Solomon-siggraph-2015,feydy_geomloss} ---which as shown in \cite{BoSch2020} struggle to approximate the unregularized solution on large problems --- with a divide-and-conquer approach.

Here we briefly recall the main definitions. From now on, $\mu, \nu$ will be two positive measures on $X$ and $Y$, respectively.

\begin{definition}[Basic and composite partitions]
	\label{def:partitions}
	 For some finite index set $I$, let $\{X_i\}_{i\in I}$ be a partition of $X$ into closed, $\mu$-essentially disjoint subsets, where $m_i \assign \mu(X_i)$ is positive for all $i\in I$. We call $\{X_i\}_{i\in I}$ the \emph{basic partition}, and $\{m_i\}_{i\in I}$ the \emph{basic cell masses}. The restriction of $\mu$ to basic cells, i.e.~$\mu_i \assign \mu\restr X_i$ for $i\in I$, are called the \emph{basic cell $X$-marginals}.
	Given a feasible plan $\pi$, we call $\nu_i 
\assign 
\proj_Y( \pi \restr (X_i\times Y))
$
the \emph{basic cell $Y$-marginals} of cell $i\in I$.
Note that due to the partition structure, the basic cell $Y$-marginals depend on the choice of $\pi$, whereas the basic cell $X$-marginals do not.

	 A \emph{composite partition} is a partition of $I$. For each $J$ in a composite partition $\partGeneric$, we define the \emph{composite cells}, and the \emph{composite cell $X$ and $Y$-marginals} respectively by 
		\begin{equation}
		X_J  \assign \bigcup_{i \in J} X_i, 
		\qquad 
		\mu_J  \assign \sum_{i \in J} \mu_i=\mu \restr X_J,
		\quad
		\nu_J  \assign \sum_{i \in J} \nu_i=\proj_Y( \pi \restr (X_J\times Y)).
		\end{equation}
		Note that the composite cell $Y$-marginals depend again on some given plan $\pi$. For simplicity we will consider two composite partitions $\partA$ and $\partB$. 
\end{definition}

The balanced domain decomposition algorithm for solving \eqref{eq:regularized-ot} is stated in Algorithms \ref{alg:DomDecIter} and \ref{alg:DomDec}. 
This formalizes the brief description given in Section \ref{sec:motivation}.
The key idea is as follows: given a current candidate coupling $\pi \in \Pi(\mu,\nu)$ and some composite partition $\partGeneric$, the objective \eqref{eq:regularized-ot} can be decomposed as
\begin{align}
\label{eq:domdec-score-decomp}
\la c, \pi \ra
+
\veps \KL(\pi \mid \mu \otimes \nu)
= \sum_{J \in \partGeneric} \left[\vphantom{\sum}\la c,\pi_J \ra + \veps \KL(\pi_J \mid \mu_J \otimes \nu)\right]
\end{align}
where $\pi_J \assign \pi \restr (X_J\times Y)$.
If one now optimizes each given restricted $\pi_J$ for $J \in \partGeneric$ subject to the temporary constraint that its composite cell marginals $\mu_J$ and $\nu_J$ remain fixed, each term in the above sum can be optimized independently and thus the for loop in Algorithm \ref{alg:DomDecIter} line \ref{codeline:cell-for-loop} can be parallelized. These temporary constraints may be added if one alternates this procedure between two staggered composite partitions $\partA$ and $\partB$.

Convergence of domain decomposition to the optimal coupling hinges on some form of connectivity property of $\partA$ and $\partB$, see \cite{BenamouPolarDomainDecomposition1994,asymptotic_domdec_arxiv,BoSch2020}.
In unbalanced optimal transport these temporary constraints cannot be added as they prevent the overall marginals of $\pi$ from changing. Thus a different parallelization strategy is required, which we develop in this article.

\begin{algorithmfloat}[hbt]
	\noindent
	\textbf{Input}: current coupling $\pi\in \measp(X\times Y)$ and partition $\partGeneric$
	
	\noindent
	\textbf{Output}: new coupling $\pi'\in \measp(X\times Y)$
	\smallskip
	
	\begin{algorithmic}[1]
		\ForAll{$J \in \partGeneric$}
		\label{codeline:cell-for-loop}
		\Comment{iterate over each composite cell}
		\State $\mu_J \leftarrow \proj_X(\pi \restr(X_{J} \times Y))$
		\label{codeline:X-marginal}
		\label{codeline:get-muJ}
		\Comment{compute $X$-marginal on cell}
		\State $\iter{\nu_J}{}\, \leftarrow \proj_Y(\pi \restr(X_{J} \times Y))$
		\label{codeline:get-composite-marginal}
		\Comment{compute $Y$-marginal on cell}
		\label{codeline:get-nuJ}
		\State $\iter{\pi_{J}}{} \leftarrow \arg\min \left\{
		\la c, \pi \ra + \veps\,\KL(\pi|\mu_J \otimes \nu) \,\middle|\, \pi \in \Pi\left(\mu_J,\iter{\nu_J}{}\right)\right\}$ 
		\label{codeline:cell-ot-problem}
		\EndFor
		\State $\pi' \leftarrow \sum_{J \in \partGeneric} \iter{\pi}{}_{J}$
	\end{algorithmic}
	\caption{\textsc{DomDecIter} \protect{\cite[Algorithm 1]{BoSch2020}}}
	\label{alg:DomDecIter}
\end{algorithmfloat}

\begin{algorithmfloat}[hbt]
	\noindent
	\textbf{Input}: an initial coupling $\piInit \in \measp(X\times Y)$
	
	\noindent
	\textbf{Output}: a sequence $(\iter{\pi}{k})_{k}$ in $\measp(X\times Y)$
	\smallskip
	
	\begin{algorithmic}[1]
		\State $\iter{\pi}{0} \leftarrow \piInit$
		\State $k \leftarrow 0$
		\Loop
		\State $k \leftarrow k+1$
		\State \algorithmicif\ ($k$ is odd)\ \algorithmicthen\ $\partk \leftarrow \partA$\ \algorithmicelse\ $\partk \leftarrow \partB$
		\Comment{select the partition}
		\State $\iter{\pi}{k} \leftarrow \text{\textsc{DomDecIter}}(\pi^{k-1}, \partk)$
		\Comment{solve all subproblems}
		\EndLoop
	\end{algorithmic}
	\caption{Domain decomposition for optimal transport \protect{\cite[Algorithm 1]{BoSch2020}}}
	\label{alg:DomDec}
\end{algorithmfloat}

For a performant implementation one should merely store $(\nu_i)_{i\in I}$ instead of the full $\pi$, since they require less memory and are sufficient to compute $\nu_J$ in line \ref{codeline:get-composite-marginal} of Algorithm \ref{alg:DomDecIter}, and the full iterates can be reconstructed efficiently from the basic cell $Y$-marginals when necessary.
For more details on the implementation of the domain decomposition algorithm we refer to \cite[Section 6]{BoSch2020}.

\section{Domain decomposition for unbalanced optimal transport}
\label{sec:unbalanced-domdec}

\subsection{Formulation of sequential and parallel algorithms}
\label{sec:formulation-sequential-parallel}
Now we want to consider domain decomposition for an unbalanced entropy regularized optimal transport problem. That is, we want to adapt Algorithm \ref{alg:DomDec} to solve \eqref{eq:unbalanced-entropic-ot}.
If one considers a splitting of the objective analogous to \eqref{eq:domdec-score-decomp} one obtains
\begin{multline}
\label{eq:domdec-score-decomp-unbalanced}
\la c, \pi \ra 
+
\varepsilon \mathrm{KL}(\pi \mid \mu\otimes\nu)
+ D_1(\proj_X\pi\mid\mu) 
+ D_2(\proj_Y\pi\mid\nu) \\
= \sum_{J \in \partGeneric} \left[\vphantom{\sum}\la c,\pi_J \ra + \veps \KL(\pi_J \mid \mu_J \otimes \nu)
+ D_1(\proj_X\pi_J\mid\mu_J) \right]
+ D_2\left( \sum_{J \in \partGeneric} \proj_Y \pi_J \middle|\nu\right).
\end{multline}
The three terms in the first sum can be treated independently, as in \eqref{eq:domdec-score-decomp}. However in the $D_2$-term all composite cells interact because typically the supports of the composite cell marginals $\nu_J = \proj_Y \pi_J$ overlap and the $Y$-marginal of $\pi$ must be allowed to change in unbalanced transport. Hence a temporary constraint to fix $\nu_J$ for each cell as in balanced transport is not admissible.

Naively, it is possible to consider the cells $J \in \partGeneric$ sequentially. Then for each cell $J$ we decompose $\pi = \pi\restr (X_J \times Y) + \pi \restr ( (X\backslash X_J) \times Y ) =: \pi_J + \pi_{-J}$, and we optimize
\begin{equation}
\label{eq:cell-primal-problem-pre}
\begin{aligned}
\min_{\pi_J\in\meas_+(X_J\times Y)}
& \la c, \pi_J+\pi_{-J} \ra 
+
\varepsilon \mathrm{KL}(\pi_J+\pi_{-J} \mid \mu\otimes\nu)
\\
&+ D_1(\proj_X[\pi_J+\pi_{-J}]\mid\mu) 
+ D_2(\proj_Y[\pi_J+\pi_{-J}]\mid\nu).
\end{aligned}
\end{equation}
The measure $\proj_Y \pi_{-J}$ appearing in the last term is precisely the \emph{background} measure $\nu_-$ that appears in Definition \ref{def:unbalanced-entropic-transport}.
For later convenience, Definition \ref{def:cell-subproblem} introduces some notation and conventions for this \emph{cell subproblem} \eqref{eq:cell-primal-problem-pre} for a fixed cell $J$.
\begin{definition}[Unbalanced cell subproblem]
	\label{def:cell-subproblem}
	For a given $\pi \in \measp(X \times Y)$ and a cell $J \in \partGeneric$ of some composite partition $\partGeneric$ we set
	\begin{equation}
	\label{eq:cell-Y-sub-marginal}
	\nu_{-J} \assign \mathcal{V}_{-J}(\pi) \qquad \tn{where} \qquad
	\mathcal{V}_{-J} : \pi \mapsto \proj_Y \pi_{-J} = \proj_Y [\pi \restr ( (X \backslash X_J) \times Y)]
	\end{equation}
	and refer to $\nu_{-J}$ as the \emph{cell $Y$-submarginal} because it is the remainder of $\proj_Y\pi$ once $\nu_J$ is subtracted. It represents our knowledge of the $Y$ marginal outside of cell $J$. The \emph{unbalanced cell subproblem} is given by
	\begin{align}
	\nonumber
	\min _{\pi_J \in \mathcal{M}_{+}\left(X_J \times Y\right)}
	E_J(\pi_J \mid \nu_{-J}),
	\quad
	\tn{with }  &E_J(\pi_J \mid \nu_{-J}) \assign
	\la c, \pi_J \ra 
	+
	\varepsilon \mathrm{KL}(\pi_J \mid \mu_J\otimes\nu)
	\\
	&+
	D_1\left(\proj_X \pi_J \mid \mu_J\right)
	+
	D_2\left(\proj_Y \pi_J+\nu_{-J} \mid \nu\right).
	\label{eq:cell-primal-problem}
	\end{align}
	The dual problem is, owing to \eqref{eq:dual-problem}, given by
	\begin{align}
	\nonumber
	\max _{\substack{\alpha \in \cont(X_J) \\ \beta \in \cont(Y)}}
	D_J(\alpha, \beta \mid \nu_{-J}),
	\ 
	\tn{with }
	D_J(&\alpha, \beta \mid \nu_{-J}) \assign 
	\,\varepsilon \int_{X_J \times Y} 1 - \exp \left(\tfrac{\alpha \oplus \beta - c}{\varepsilon}\right) \mathrm{d} (\mu_J\otimes\nu)
	\\
	&-
	\int_{X_J} \varphi_1^*(-\alpha) \mathrm{d} \mu_J
	-
	\int_Y \varphi_2^*(-\beta) \mathrm{d} \nu 
	- \la \beta , \nu_{-J} \ra.
	\label{eq:cell-dual-problem}
	\end{align}
\end{definition}

The naive adaptation of Algorithm \ref{alg:DomDecIter} for domain decomposition of unbalanced transport is then given by replacing the balanced cell problem in line \ref{codeline:cell-ot-problem} by an appropriate unbalanced counterpart \eqref{eq:cell-primal-problem-pre} of the form \eqref{eq:cell-primal-problem} (see also \eqref{eq:unbalanced-problem}) and to process the for loop sequentially.
This is formalized in Algorithm \ref{alg:UnbalancedDomDecSequential}, which is then plugged into Algorithm \ref{alg:DomDec}. Our ultimate goal in this article is to obtain a parallel version of this algorithm.

\begin{algorithmfloat}[hbt]
	\noindent
	\textbf{Input}: current coupling $\pi\in \measp(X\times Y)$ and partition $\partGeneric$
	
	\noindent
	\textbf{Output}: new coupling $\pi'\in \measp(X\times Y)$
	\smallskip
	
	\begin{algorithmic}[1]
		\State $\pi' \leftarrow \pi$
		\ForAll{$J \in \partGeneric$}
		\Comment{iterate over each composite cell (sequential!)}
		\State $\pi_{J} \leftarrow \pi' \restr(X_J \times Y)$
		\State $\iter{\nu_{-J}}{}\, \leftarrow \proj_Y (\pi' - \pi_J)$
		\label{codeline:get-composite-submarginal}
		\State $\iter{\tilde{\pi}_{J}}{}  \leftarrow \underset{\pi_J \in \measp(X_J \otimes Y)}{\argmin} 
		E_J(\pi_J \mid \nu_{-J})$
		\label{codeline:cell-uot-problem}
		\Comment{solve subproblem, see \eqref{eq:cell-primal-problem}}
		\State $\pi' \leftarrow \pi' - \pi_J + \tilde{\pi}_J$
		\Comment{update plan after every cell iteration}
		\EndFor
	\end{algorithmic}
	\caption{Sequential \textsc{DomDecIter} for unbalanced transport}
	\label{alg:UnbalancedDomDecSequential}
\end{algorithmfloat}

\begin{remark}
	\label{rk:score-non-increasing}
	Algorithm \ref{alg:UnbalancedDomDecSequential} is well defined, and can be interpreted as a block-coordinate descent algorithm for the problem \eqref{eq:unbalanced-problem}, where in each iteration we optimize over the entries within a composite cell. Hence, the score is non-increasing.
\end{remark}

Convergence of Algorithm \ref{alg:UnbalancedDomDecSequential} will be proved in Section \ref{sec:convergence-sequential}. 
Deriving a parallel version is far from straightforward, since when several cells are updated simultaneously, the information in $\nu_{-J}$ is no longer reliable. In particular, several subproblems may compete to create (or destroy) mass at the same positions in $Y$, which may appear to be optimal from each problems' perspective but is actually globally detrimental. If unaddressed, these dynamics can grow into a feedback loop with diverging iterates, as exemplified later in Section \ref{sec:multiscale}.

Fortunately, it is possible to combine independent local updates with guaranteed global score decrement and convergence. Algorithm \ref{alg:UnbalancedDomDecParallel} gives a general method for parallelizing unbalanced domain decomposition updates. The subproblems are solved in parallel, using the previous iterate for computing $\nu_{-J}$. Subsequently, each cell is updated with a convex combination of the previous and the new coupling for that cell.
The weights are determined by an auxiliary function \GetWeights, Algorithm \ref{alg:UnbalancedDomDecParallel}, line \ref{codeline:getWeights}.
Convergence of the algorithm now hinges on properties of this function.
Naive parallelization of the loop in Algorithm \ref{alg:UnbalancedDomDecSequential} would correspond to the function
\begin{equation}
\label{eq:GWFast}
\tn{\GetWeightsSub{greedy}} : ((\pi_J)_J, (\tilde{\pi}_J)_J) \mapsto (1,\ldots,1)
\end{equation}
which always returns the greedy choice $\theta_J=1$ for all $J$. While this may lead to diverging iterates, by convexity of the objective, on each cell $J$, $\tilde{\pi}_J$ lies in a direction of $\pi_J$ where the objective locally decreases. Therefore, if one moves with sufficiently small step-size from the previous iterates towards the new iterates, one can ensure that the global objective decreases and the algorithm converges. However, this may yield small, impractical step sizes on problems with many cells. Fortunately, other, more adaptive choices with larger step sizes can also be identified.
In Section \ref{sec:domdec-parallel} we give sufficient conditions on $\GetWeights$ for convergence, as well as several examples satisfying these conditions.

\begin{algorithmfloat}[hbt]
	\noindent
	\textbf{Input}: current coupling $\pi\in \measp(X\times Y)$ and partition $\partGeneric$
	
	\noindent
	\textbf{Output}: new coupling $\pi'\in \measp(X\times Y)$
	\smallskip
	
	\begin{algorithmic}[1]
		\ForAll{$J \in \partGeneric$}
		\Comment{iterate over each composite cell}
		\State $\pi_{J} \leftarrow \pi \restr(X_J \times Y)$
		\State $\iter{\nu_{-J}}{}\, \leftarrow \proj_Y (\pi - \pi_J)$
		\State $\iter{\tilde{\pi}_{J}}{}  \leftarrow \underset{\pi_J \in \measp(X_J \otimes Y)}{\argmin} 
		E_J(\pi_J \mid \nu_{-J})$	
		\Comment{solve subproblem}
		\EndFor
		\State $(\theta_J)_J\leftarrow \text{\GetWeights}((\pi_J)_J, (\tilde{\pi}_J)_J)$
		\label{codeline:getWeights}
		\State $\pi' \leftarrow  \underset{{J\in \partGeneric}}{\sum} (1- \theta_J) \pi_J + \theta_J\tilde{\pi}_{J}$	
		\Comment{convex combination of old and new cell plans}
	\end{algorithmic}
	\caption{Parallel \textsc{DomDecIter} for unbalanced transport. }
	\label{alg:UnbalancedDomDecParallel}
\end{algorithmfloat}

\subsection{Properties of the cell subproblem}
\label{sec:properties-cell-subproblem}
In this section we establish properties of the unbalanced cell problem (Definition \ref{def:cell-subproblem}) that are required for the convergence analysis of the sequential and parallel domain decomposition algorithms.

\begin{assumption}\label{ass:divergence}
	We focus on entropy functionals $\varphi$ (in the sense of Definition \ref{def:divergence}) with the following properties:
	\begin{enumerate}
		\item $\varphi$ is non-negative. \label{item:entropy-bounded-below}
		\item $\varphi$ is finite on $[0, \infty)$. \label{item:entropy-zero-finite}
		\item $\varphi$ is continuously differentiable on the interior of its domain.
	\end{enumerate}
\end{assumption}
The first two assumption ensure that the infimum in \eqref{eq:cell-primal-problem} is not $-\infty$ or $+\infty$, respectively. The third assumption is, in virtue of Theorem \ref{theorem:existence-global-solution}, sufficient for uniqueness of the optimal dual potentials, which simplifies their convergence analysis.

Moreover, in the following we will assume that $X$ and $Y$ are finite. This is sufficient to show that the solution to the cell problem \eqref{eq:cell-primal-problem} is stable with respect to the input data (Lemma \ref{lemma:continuity-solve-map}). For general $Y$ and $\nu$ the cell problem \eqref{eq:cell-primal-problem} is in general not stable, as illustrated in Remark \ref{remark:continuousY}.
For simplicity we also assume that $\mu$ and $\nu$ have full support. Otherwise we can simply drop points with zero mass from $X$ and $Y$.
\begin{assumption}\label{ass:finite}
	The spaces $X$ and $Y$ are finite. $\mu$ and $\nu$ have full support.
\end{assumption}

The following properties of $\varphi^*$ directly follow from Assumption \ref{ass:divergence}. 
The proofs can be found in any convex analysis textbook, for example \cite[Chapter X]{convex-analysis-textbook}.

\begin{lemma}[Properties of $\varphi^*$]
	\label{lemma:properties-conjugate}
	For $\varphi$ satisfying Assumption \ref{ass:divergence}, $\varphi^*$ has the following properties:
	\begin{enumerate}
		\item $\varphi^*(0)$ is finite. 
		\item $\varphi^*$ is bounded from below by $-\varphi(0)$.
		\item $\varphi^*$ is superlinear, i.e., $\lim_{z\rightarrow \infty}\varphi^*(z) /z = + \infty$.
	\end{enumerate}
\end{lemma}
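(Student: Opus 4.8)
The plan is to argue directly from the definition of the convex conjugate, $\varphi^*(z) = \sup_{s\in\R}\bigl(sz - \varphi(s)\bigr)$, and to invoke the individual parts of Assumption \ref{ass:divergence} as needed. Since $\dom(\varphi)\subseteq[0,\infty)$, the supremum effectively ranges over $s\geq 0$, and almost everything follows by inserting a single well-chosen feasible value of $s$ into this supremum and using convex duality between growth of $\varphi^*$ and finiteness of $\varphi$. The crucial standing fact, used repeatedly, is that finiteness of $\varphi$ on all of $[0,\infty)$ (the second part of Assumption \ref{ass:divergence}, label \ref{item:entropy-zero-finite}) makes $\varphi(s)$ a genuine real number for every $s\geq 0$; in particular $\varphi(0)$ is finite. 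The differentiability hypothesis plays no role here and would only be needed elsewhere.

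For the second claim I would evaluate the supremum at the feasible point $s=0$, which immediately gives $\varphi^*(z)\geq 0\cdot z - \varphi(0) = -\varphi(0)$ for every $z\in\R$. This is the shortest step, and it simultaneously supplies the lower bound needed for the first claim. For the first claim I would then pair this with an upper bound at $z=0$: by non-negativity of $\varphi$ (first part of Assumption \ref{ass:divergence}, label \ref{item:entropy-bounded-below}) we have $-\varphi(s)\leq 0$ for all $s$, whence $\varphi^*(0)=\sup_s\bigl(-\varphi(s)\bigr)\leq 0$. Combined with $\varphi^*(0)\geq -\varphi(0)>-\infty$, this sandwiches $\varphi^*(0)$ between two finite numbers and proves its finiteness.

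For the superlinearity claim I would again use a single test point, but now let it tend to infinity. Fixing an arbitrary $M>0$ and taking $s=M$ in the supremum yields $\varphi^*(z)\geq Mz - \varphi(M)$, so that $\varphi^*(z)/z\geq M - \varphi(M)/z$ for all $z>0$. Because $\varphi(M)$ is finite, the correction term $\varphi(M)/z$ tends to $0$ as $z\to\infty$, giving $\liminf_{z\to\infty}\varphi^*(z)/z\geq M$. Since $M$ was arbitrary, the $\liminf$ equals $+\infty$, which is exactly $\lim_{z\to\infty}\varphi^*(z)/z=+\infty$.

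The ``main obstacle'' is really just bookkeeping rather than any genuine difficulty: one must make sure that the chosen test points $s=0$ and $s=M$ lie in $\dom(\varphi)$ so that $\varphi(0)$ and $\varphi(M)$ are finite, which is precisely what the finiteness-on-$[0,\infty)$ assumption (label \ref{item:entropy-zero-finite}) guarantees. It is worth emphasizing in the write-up that this assumption is the active ingredient for superlinearity of $\varphi^*$, since finiteness of $\varphi$ on the entire half-line is exactly the dual statement to superlinear growth of the conjugate; the non-negativity assumption (label \ref{item:entropy-bounded-below}) is what secures the finite \emph{upper} bound for $\varphi^*(0)$.
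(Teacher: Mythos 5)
Your proof is correct, and the paper itself gives no explicit argument for this lemma --- it only points to a standard convex analysis textbook --- so your direct computation from the definition $\varphi^*(z)=\sup_{s}\bigl(sz-\varphi(s)\bigr)$, testing at $s=0$ for the lower bound, using non-negativity of $\varphi$ for the upper bound at $z=0$, and testing at arbitrary $s=M$ for superlinearity, is exactly the standard argument that citation stands in for. No gaps: finiteness of $\varphi$ on $[0,\infty)$ indeed makes each test value admissible, and $\liminf_{z\to\infty}\varphi^*(z)/z\geq M$ for every $M$ gives the claimed limit.
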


The following result gives existence and uniqueness of optimizers for finite spaces.
\begin{lemma}[Properties of cell optimizers on finite spaces]
	\label{lemma:uniform-bound-duals}

	Assume that Assumptions \ref{ass:divergence} and \ref{ass:finite} hold.
	Then:
	\begin{enumerate}
		\item \label{item:existence-primal} There exists a unique cell primal optimizer $\pi_J$ for \eqref{eq:cell-primal-problem}.
		
		\item \label{item:existence-dual} There exist unique dual optimizers $(\alpha_J, \beta_J)$ for \eqref{eq:cell-dual-problem}. The primal and dual optimizers satisfy 
		\begin{equation}
		\label{eq:diagonal-scaling}
		\pi_J 
		=
		\exp \left(
		\tfrac{\alpha_J \oplus \beta_J - c}{\veps} 
		\right)\mu_J \otimes \nu,
		\  
		\tn{and}
		\ 
		\begin{cases}
		-\alpha_J(x) = \varphi_1'\left(
		\RadNik{\proj_X \pi_J}{ \mu_J}(x)
		\right)
		\\
		-\beta_J(y) = \varphi_2'\left(
		\RadNik{(\proj_Y \pi_J+\nu_{-J})}{\nu}(y)
		\right) 
		\end{cases}
		\end{equation}
		for all $(x,y) \in X \times Y$.
		\item \label{item:bound-dual} Let $M$ be such that $0 \le \nu_{-J} \le M \nu$. Then there exists a positive constant $C$ (depending only on $\mu_J$, $\nu$, $c$, $\veps$ and $M$) such that 
		\begin{equation*}
		\|\alpha_J\|_{\infty} \leq C 
		\quad \text { and } \quad 
		\|\beta_J\|_{\infty} \leq C.
		\end{equation*}
	\end{enumerate}
\end{lemma}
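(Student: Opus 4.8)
The plan is to treat the three claims in order, exploiting that the cell problem \eqref{eq:cell-primal-problem} is literally an instance of the global problem \eqref{eq:unbalanced-problem} with $X$, $\mu$, $\nu_{-}$ replaced by $X_J$, $\mu_J$, $\nu_{-J}$, so Theorem \ref{theorem:existence-global-solution} becomes available once feasibility is checked. For the first claim I would verify feasibility by evaluating the objective at the zero plan: $E_J(0\mid\nu_{-J}) = \veps\,\KL(0\mid\mu_J\otimes\nu) + D_1(0\mid\mu_J) + D_2(\nu_{-J}\mid\nu)$ is finite because $\varphi_1(0)<\infty$ and $\varphi_2$ is finite on $[0,\infty)$ (the second condition of Assumption \ref{ass:divergence}), and because $\RadNik{\nu_{-J}}{\nu}\le M$ is bounded, so $D_2(\nu_{-J}\mid\nu)\le \nu(Y)\,\max_{[0,M]}\varphi_2$. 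Existence and uniqueness of $\pi_J$ then follow directly from Theorem \ref{theorem:existence-global-solution}.\ref{item:existence-global-primal}.

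For the second claim I would first obtain a dual maximizer by a coercivity argument on the finite-dimensional concave functional $D_J$. The exponential term drives $D_J\to-\infty$ whenever $\alpha(x)+\beta(y)\to+\infty$ (using that $\mu_J$ and $\nu$ have full support), whereas the superlinearity of $\varphi_1^*,\varphi_2^*$ in Lemma \ref{lemma:properties-conjugate} forces $D_J\to-\infty$ as $\alpha(x)\to-\infty$ or $\beta(y)\to-\infty$, dominating the single linear term $\la\beta,\nu_{-J}\ra$; together these bound the superlevel sets of $D_J$, so a maximizer exists by the Weierstrass theorem. The scaling identity and the subdifferential optimality conditions then come from Theorem \ref{theorem:existence-global-solution}.\ref{item:existence-global-dual}; since the Gibbs form makes $\pi_J$ strictly positive, both marginal densities lie in the interior $(0,\infty)$ of the respective domains, where $\partial\varphi_i$ is single-valued and equal to $\varphi_i'$ by the differentiability assumed in Assumption \ref{ass:divergence}, which upgrades the inclusions to the equalities in \eqref{eq:diagonal-scaling}. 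Uniqueness of $(\alpha_J,\beta_J)$ is then immediate, because those equalities express the potentials pointwise in terms of the unique primal optimizer $\pi_J$.

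The third claim is the real work, and I would prove it by a bootstrap between the potentials and the marginal densities. Minimality gives $E_J(\pi_J\mid\nu_{-J})\le E_J(0\mid\nu_{-J})=:\bar E$, a constant over the admissible data; since every term of $E_J$ is non-negative, $\KL(\pi_J\mid\mu_J\otimes\nu)\le\bar E/\veps$, and since the base measure has a positive minimal atom $m_{\min}$ each summand is individually controlled, so the Gibbs density $r\assign\RadNik{\pi_J}{\mu_J\otimes\nu}$ obeys $r\le R_{\max}$. This bounds $u\assign\RadNik{\proj_X\pi_J}{\mu_J}$ and $s\assign\RadNik{(\proj_Y\pi_J+\nu_{-J})}{\nu}$ from above, and as $\varphi_i'$ is nondecreasing the relations \eqref{eq:diagonal-scaling} yield the lower bounds $\alpha_J\ge-\varphi_1'(u_{\max})$ and $\beta_J\ge-\varphi_2'(s_{\max})$. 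Feeding these lower bounds, together with the upper bound on $c$, back into $r=\exp(\tfrac{\alpha_J\oplus\beta_J-c}{\veps})$ produces a strictly positive lower bound $r\ge r_{\min}>0$; hence $u$ and $s$ are bounded away from zero and \eqref{eq:diagonal-scaling} gives the matching upper bounds $\alpha_J\le-\varphi_1'(u_{\min})$, $\beta_J\le-\varphi_2'(s_{\min})$. All constants depend only on $\mu_J,\nu,c,\veps,M$, which is the asserted bound $C$.

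I expect the main obstacle to be closing this bootstrap, specifically the uniform lower bound on $r$ (equivalently, the upper bound on the potentials): the $\KL$ estimate only controls $r$ from above, so the lower bound must be recovered indirectly by first bounding the potentials from below through the marginal conditions and the monotonicity of $\varphi_i'$, and only then propagating back through the exponential map. A secondary point requiring care is the coercivity step, where one must check that the superlinear conjugate penalties dominate the linear term $\la\beta,\nu_{-J}\ra$ in every direction, including coordinates $y$ with $\nu_{-J}(y)=0$, whose growth is caught only by the exponential term.
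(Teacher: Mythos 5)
Your proposal is correct, and items 1 and 2 follow essentially the paper's path (feasibility via the zero plan plus Theorem \ref{theorem:existence-global-solution}; existence of duals via boundedness of superlevel sets of $D_J$; strict positivity of the Gibbs density turning the subdifferential inclusions into the equalities \eqref{eq:diagonal-scaling} and forcing uniqueness). Where you genuinely diverge is item \ref{item:bound-dual}: the paper extracts the quantitative bound on $(\alpha_J,\beta_J)$ from the dual side, by taking the superlevel set at level $L = D_J(0,0\mid\nu_{-J})$ and bounding each term of $D_J$ (superlinearity of $\varphi_i^*$ absorbs the linear term $\la \beta,\nu_{-J}\ra$ via $0\le\nu_{-J}\le M\nu$, then the exponential term yields the upper bounds), so existence and the bound come out of one computation. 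You instead run a primal bootstrap: $E_J(\pi_J\mid\nu_{-J})\le E_J(0\mid\nu_{-J})$ gives, through the minimal atom of $\mu_J\otimes\nu$, an upper bound on the Gibbs density $r$, hence on the marginal densities; monotonicity of $\varphi_i'$ in \eqref{eq:diagonal-scaling} then bounds $\alpha_J,\beta_J$ from below, the exponential relation bounds $r$ (hence the densities) from below, and a second application of \eqref{eq:diagonal-scaling} closes the loop with the upper bounds. This is valid under the stated assumptions (it uses $c\ge 0$, $\varphi_i\ge 0$, finiteness of $\varphi_2$ on $[0,M]$, full support, and finiteness of $\varphi_i'$ on $(0,\infty)$), all constants depend only on $\mu_J,\nu,c,\veps,M$, and it is arguably more elementary than the paper's term-by-term dual estimate; its mild costs are that it leans on the finite-atom structure more heavily (the paper's dual estimate is closer in spirit to what one would attempt beyond finite spaces) and that it does not dispense with the superlevel-set machinery, since your existence step in item 2 is exactly the paper's coercivity argument used qualitatively — including the point you flag yourself, that for directions $\beta(y)\to+\infty$ the decay must come from the exponential term after the conjugate terms have pinned $\alpha,\beta$ from below.
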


\begin{proof}
	
	With finite support and the assumption that $\varphi$ is finite on $[0,\infty)$, the minimal value of \eqref{eq:cell-primal-problem} is finite (for instance, the zero measure $\pi=0$ yields a finite objective).
	Therefore Theorem \ref{theorem:existence-global-solution} can be applied. This yields Statement \ref{item:existence-primal} and the optimality condition \eqref{eq:diagonal-scaling} (if dual maximizers exist) where we use that since by Assumption \ref{ass:divergence} $\varphi_1$ and $\varphi_2$ are differentiable, the subdifferential in the optimality condition of Theorem \ref{theorem:existence-global-solution} only contains one element, and that $\mu$-almost every $x$ is every $x$ on a finite space with fully supported $\mu$ (and likewise for $\nu$).
	
	By construction the dual objective $D(\cdot, \cdot | \nu_{-J})$, \eqref{eq:cell-dual-problem}, is upper semi-continuous, by finiteness of $\varphi_i^*(0)$ there are feasible dual candidates.
	To establish existence of dual maximizers we need to show that the superlevel sets of $D(\cdot, \cdot | \nu_{-J})$ are bounded.
	Given existence, the left side of \eqref{eq:diagonal-scaling} then shows strict positivity of the primal minimizer $\pi_J$ on the support of $\mu_J \otimes \nu$. The right side then gives uniqueness of the dual maximizers.
	
	Boundedness of the superlevel sets of $D(\cdot, \cdot | \nu_{-J})$ is a rather simple consequence of the superlinearity of $\varphi_i^*$. We still describe the argument in detail, since it provides the bounds for Statement \ref{item:bound-dual}.
	
	Let $L \in \R$ and assume that $(\alpha,\beta)$ are dual candidates in the corresponding superlevel set, i.e.~$L \leq D(\alpha, \beta \mid \nu_{-J})$. Then for any $(x,y) \in X_J \times Y$,
	\begin{align*}
	\nonumber
	L &\le D(\alpha, \beta \mid \nu_{-J}) 
	\le 
	\veps \|\mu_J \otimes \nu \|
	- \veps \exp \left(\frac{\alpha(x) + \beta(y) - c(x,y)}{\varepsilon}\right) 
	\mu_J(\{x\})\nu(\{y\})
	\\
	\nonumber
	&
	\ \ - \varphi_1^*(-\alpha(x)) \cdot \mu_J(\{x\})
	+
	\varphi_1(0)\mu_J(X_J \setminus \{x\})
	\\
	&
	\ \ - \varphi_2^*(-\beta(y)) \cdot \nu (\{y\})
	- \beta(y) \nu_{-J}(\{y\})
	+ \int_{Y\backslash\{y\}}
	\varphi_2(\RadNik{\nu_{-J}}{\nu}(y))
	\diff \nu(y)
	,
	\end{align*}
	where we use Lemma \ref{lemma:properties-conjugate} to bound the contribution of the remaining terms. 
	Further, using that $\RadNik{\nu_{-J}}{\nu}(y)$ is non-negative and bounded by some $M < \infty$ (since $Y$ is finite, $\nu$ has full support, and all masses assigned to single points are finite), and that convex functions assume their maximum on the boundary of their domain, the integrand of the last term must be bounded by the maximum of $\varphi_2(0)$ and $\varphi_2(M)$. 
	Collecting the terms $\veps \| \mu_J\otimes \nu\|$, $\varphi_1(0)\mu_J(X_J)$ and $	\max \left\{
	\varphi_2(0), \varphi_2(M)
	\right\}
	\nu(Y)$ in a constant $C$ we obtain:
	\begin{align*}
	L&\le 
	C
	- \veps \exp \left(\frac{\alpha(x) + \beta(y)}{\varepsilon}\right) 
	\exp \left(\frac{-c(x,y)}{\varepsilon}\right) 
	\mu_J(\{x\})\nu(\{y\})
	\\
	\nonumber
	&
	\ \ - \varphi_1^*(-\alpha(x)) \cdot \mu_J(\{x\})
	- 
	\left[
	\varphi_2^*(-\beta(y))  
	+ \beta(y)\RadNik{\nu_{-J}(y)}{\nu}
	\right] \nu (\{y\})
	\end{align*}
	
	We now show how the lower bounds and superlinearity of the entropy functions allow to bound $\alpha$ and $\beta$ from below. Let us outline the argument in detail for $\beta$: assume that $\beta(y)$ is negative and use the bound from below of the exponential term and of $\varphi_1^*$:
	\begin{align*}
	L&\le 
	C
	+
	\varphi_1(0) \mu_J(\{x\})
	-
	\left[
	\varphi_2^*(-\beta(y))  
	+ \beta(y)\RadNik{\nu_{-J}(y)}{\nu}
	\right] \nu (\{y\})
	\\
	&\le 
	C
	+
	\varphi_1(0) \mu_J(\{x\})
	-
	\left[
	\varphi_2^*(-\beta(y))  
	+ M\beta(y)
	\right] \nu (\{y\}).	
	\end{align*}
	Then, the superlinearity of $\varphi_2^*$ implies a lower bound on $\beta(y)$ that does not depend on $\nu_{-J}$ nor $\alpha(x)$.
	Likewise, using the uniform lower bound for the $\varphi_2^*$ term yields a lower bound on $\alpha(x)$, that does not depend on $\nu_{-J}$ and $\beta(y)$.
	Finally, by the superlinearity of $\exp(\cdot)$, the total lower boundedness of $\varphi^*_i$, once more the bound $0 \leq \nu_{-J} \leq M \cdot \nu$, and by the individual lower bounds on $\alpha(x)$ and $\beta(y)$, we obtain similar upper bounds on $\alpha(x)$ and $\beta(y)$.
	Taking now the extrema over these bounds with respect to $(x,y) \in X \times Y$, we obtain existence of some $C<\infty$ that only depends on $\mu$, $\nu$, $c$, $\veps$, $M$, and $L$ but not on $\nu_{-J}$, such that $\|\alpha\|_\infty \leq C$, and $\|\beta\|_\infty \leq C$.	
	This implies boundedness of the superlevel sets.
	
	Finally, for Statement \ref{item:bound-dual}, note that setting $(\alpha,\beta)=(0,0)$ in \eqref{eq:cell-dual-problem} yields a value for $L$ such that the corresponding superlevel set is non-empty (and therefore contains the maximizer), and this $L$ is in turn easy to bound from below in terms that only depend on $\mu$, $\nu$, $c$, $\veps$, and $M$.
\end{proof}

\begin{lemma}[Continuity of the solution map]
	\label{lemma:continuity-solve-map}
	Let $S_J$ be the mapping from a global transport plan to the solution of the domain decomposition subproblem on cell $J$. More precisely, recall the $Y$-submarginal map $\mathcal{V}_{-J} : \pi \mapsto \nu_{-J} \assign  \proj_Y \pi \restr ( (X \backslash X_J) \times Y)$  from \eqref{eq:cell-Y-sub-marginal} and let
	\begin{align}
	\label{eq:solve-map}
	S_J : \ \measp(X\times Y) &\rightarrow \measp(X_J\times Y) &
	\pi &\mapsto \argmin_{\hat{\pi}_J \in \measp(X_J \otimes Y)} 
	E_J(\hat{\pi}_J \mid \mathcal{V}_{-J} (\pi) ).
	\end{align}
	Under Assumptions \ref{ass:divergence} and \ref{ass:finite} the map $S_J$ is continuous on the set of plans $\pi$ where $E_J(\pi \restr (X_J \times Y)|\mathcal{V}_{-J}(\pi))<\infty$.
\end{lemma}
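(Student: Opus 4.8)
The plan is to exploit that, under Assumption~\ref{ass:finite}, everything reduces to a finite-dimensional problem and that the primal minimizer is a continuous image of the dual optimizers through the diagonal-scaling identity \eqref{eq:diagonal-scaling}. First I would factor the map as $S_J = T_J \circ \mathcal{V}_{-J}$, where $T_J : \nu_{-J} \mapsto \argmin_{\hat{\pi}_J \in \measp(X_J \times Y)} E_J(\hat{\pi}_J \mid \nu_{-J})$. The marginalization-and-restriction map $\mathcal{V}_{-J}$ is linear, hence continuous on the finite-dimensional space $\measp(X\times Y)$, so it suffices to prove that $T_J$ is continuous. By Lemma~\ref{lemma:uniform-bound-duals}, Statement~\ref{item:existence-primal}--\ref{item:existence-dual}, $T_J(\nu_{-J})$ exists, is unique, and equals $\exp\big((\alpha_J \oplus \beta_J - c)/\veps\big)\,\mu_J\otimes\nu$ for the unique dual maximizers $(\alpha_J,\beta_J)$ of \eqref{eq:cell-dual-problem}. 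Since this exponential expression is a continuous function of $(\alpha_J,\beta_J)$, the whole task reduces to showing that the dual-optimizer map $\nu_{-J}\mapsto(\alpha_J,\beta_J)$ is continuous.

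For the latter I would run a subsequence (closed-graph) argument. Fix $\nu_{-J}^0$ and take any sequence $\nu_{-J}^n \to \nu_{-J}^0$ in $\measp(Y)$. Because $Y$ is finite and $\nu$ has full support, there is a single $M<\infty$ with $0\le\nu_{-J}^n\le M\nu$ for all $n$ and $0\le\nu_{-J}^0\le M\nu$ (take the maximum of the finitely many convergent density ratios $\RadNik{\nu_{-J}^n}{\nu}$). Lemma~\ref{lemma:uniform-bound-duals}, Statement~\ref{item:bound-dual}, then confines the corresponding maximizers $(\alpha_n,\beta_n)$ to a fixed compact box $\{\|\cdot\|_\infty\le C(M)\}$. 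Given any subsequence, Bolzano--Weierstrass extracts a further subsequence with $(\alpha_{n_k},\beta_{n_k})\to(\alpha_*,\beta_*)$. The dual functional $D_J(\alpha,\beta\mid\nu_{-J})$ in \eqref{eq:cell-dual-problem} is jointly continuous in all three arguments (finite sums of continuous integrands, with the coupling term $\la\beta,\nu_{-J}\ra$ bilinear), so passing to the limit in the optimality inequality $D_J(\alpha_{n_k},\beta_{n_k}\mid\nu_{-J}^{n_k})\ge D_J(\alpha,\beta\mid\nu_{-J}^{n_k})$ shows that $(\alpha_*,\beta_*)$ maximizes $D_J(\cdot,\cdot\mid\nu_{-J}^0)$. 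Uniqueness (Lemma~\ref{lemma:uniform-bound-duals}, Statement~\ref{item:existence-dual}) forces $(\alpha_*,\beta_*)$ to be the unique maximizer for $\nu_{-J}^0$; since every subsequence has a further subsequence converging to this same limit, the full sequence converges. Composing the continuous maps $\mathcal{V}_{-J}$, $\nu_{-J}\mapsto(\alpha_J,\beta_J)$, and the exponential then yields continuity of $S_J$.

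The hard part will be the non-compactness of the dual feasible set: the maximization in \eqref{eq:cell-dual-problem} ranges over all of $\cont(X_J)\times\cont(Y)$, so one cannot directly invoke a maximum theorem. The decisive ingredient is the uniform dual bound of Lemma~\ref{lemma:uniform-bound-duals}, Statement~\ref{item:bound-dual}, which holds uniformly over $\{0\le\nu_{-J}\le M\nu\}$ and thereby lets me localize the problem to a compact box along any convergent input sequence; once compactness is restored, the remaining continuity and uniqueness steps are routine. A minor point to verify is that the finite-objective hypothesis $E_J(\pi\restr(X_J\times Y)\mid\mathcal{V}_{-J}(\pi))<\infty$ serves only to keep us in the regime where this reduction is clean, while the essential continuity is driven entirely by the continuous dependence of $(\alpha_J,\beta_J)$ on $\nu_{-J}$.
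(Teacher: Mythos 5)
Your proposal is correct and takes essentially the same route as the paper: both reduce continuity of $S_J$ to continuity of the dual maximizers in $\nu_{-J}$, use the uniform bound of Lemma~\ref{lemma:uniform-bound-duals} (item 3) together with equiboundedness of the densities $\RadNik{\nu_{-J}^n}{\nu}$ to extract a convergent subsequence of duals, identify the limit as the unique maximizer of the limit cell dual problem, and conclude via the diagonal-scaling formula. The only cosmetic difference is that the paper identifies the limit by passing to the limit in the optimality conditions \eqref{eq:diagonal-scaling} (using continuity of $\varphi_i'$), whereas you pass to the limit in the dual optimality inequality; there, note that $D_J$ need only be upper semicontinuous in $(\alpha,\beta)$ when $\varphi_i^*$ takes the value $+\infty$, which still suffices for your argument.
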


\begin{proof}[Proof]
	Let $(\pi^n)_n$ be a sequence converging to some limit $\pi$, such that $E_J(\pi^n \restr (X_J \times Y)|\nu^n_{-J})$ is finite on the whole sequence and similarly for the limit, where as in \eqref{eq:cell-Y-sub-marginal}, we define the $Y$-submarginals:
	\begin{align}
	\label{eq:submarginals}
		\nu_{-J}^n & \assign \mathcal{V}_{-J}(\pi^n), &
		\nu_{-J} & \assign \mathcal{V}_{-J}(\pi).
	\end{align}
	Recall that $\mathcal{V}_{-J}$ is continuous, since it is a combination of restriction (which is continuous in the finite-dimensional setting) and projection (which is always continuous).
	
	Let $\pi^n_J$ (resp. $\pi_J$) denote the minimizer of $E(\cdot \mid \nu_{-J}^n)$, (resp. $E(\cdot \mid \nu_{-J})$) which exists by virtue of Lemma \ref{lemma:uniform-bound-duals}. 
	Since $E_J$ is jointly continuous at $(\pi_J, \nu_{-J})$, for sufficiently large $n$ it holds: 
	\begin{equation}
		\label{eq:comparison-compactness-pinJ}
		E_J(\pi^n_J \mid \nu_{-J}^n)
		\le 
		E_J(\pi_J\mid \nu_{-J}^n)
		\le 
		E_J(\pi_J \mid \nu_{-J}) + 1,
	\end{equation} 
	where in the first inequality we used that $\pi^n_J$ is the minimizer for $\nu_{-J}^n$. 
	Neglecting the marginal penalizations in \eqref{eq:cell-primal-problem} (which by assumption are positive), we obtain 
	\begin{equation}
		\la c, \pi^n_J \ra 
		+
		\varepsilon \mathrm{KL}(\pi^n_J \mid \mu_J\otimes\nu)
		\le 
		E_J(\pi_J \mid \nu_{-J}) + 1,
	\end{equation}
	which in conjunction with the super-linearity of the $\KL$ divergence implies pre-compactness of the sequence $(\pi^n_J)_n$.
	Taking a subsequence converging to some plan $\pi_J^*$, and taking the limit $n\to\infty$ in \eqref{eq:comparison-compactness-pinJ} finally yields 
	\begin{equation}
		E_J(\pi^*_J \mid \nu_{-J}) \le E_J(\pi_J \mid \nu_{-J}).
	\end{equation}
	
	Since by Lemma \ref{lemma:uniform-bound-duals} the minimizer of $E_J(\cdot \mid \nu_{-J}$) is unique, we conclude that $\pi^*_J = \pi_J$ and that $\pi^n_J \to \pi_J$ along the original sequence.
	
\end{proof}

\begin{remark}
	\label{remark:continuousY}
	The solution map \eqref{eq:solve-map} is not continuous for general $Y$. When $\pi^n$ is a converging sequence such that the sequence $\nu_{-J}^n$ oscillates strongly, the limit of the cell optimizers $S_J(\pi^n)$ might be suboptimal for the limit problem $E(\cdot \mid \nu_{-J})$, when $\nu_{-J}$ is the weak* limit of $(\nu_{-J})^n$. As an example, consider the following setting: 
	\begin{equation}
	\begin{gathered}
	X = \{0,1\}, \quad X_J = \{0\},
	\qquad 
	Y = [0,1],
	\qquad
	c \equiv 0,
	\qquad
	\veps = 0,
	\qquad
	D_1 = D_2 = \KL,
	\\
	\mu_J = M\delta_0\ \tn{ with $0 < M < 1/2$}, 
	\qquad 
	\nu = \Lebesgue\restr[0,1].
	\end{gathered}
	\end{equation}
	The example is easier with $\veps=0$ but, as will be seen, the argument can be generalized to $\veps>0$.
	Since $X_{J}$ and $X \setminus X_{J}$ are just single points, $\pi^n_J$ and $\pi^n_{-J}$ can be identified with their $Y$-marginals $\nu_J^n$ and $\nu_{-J}^n$. For $\nu_J^n$ we pick an oscillating square wave pattern with frequency $n$:
	$$ 
	\nonumber
		\nu_J^n \assign
		H(\sin(2\pi n))\nu, \quad \text{with } H(s) = +1 \text{ if $s < 0$, otherwise $H(s)=0$,}
	$$
	and set $\nu^n_{-J} \assign \nu-\nu^n_J$ as the complementary square wave (see Figure \ref{fig:counterexample_1}).
	The weak* limit of $\nu_{-J}^n$ is clearly $\nu_{-J} = \tfrac{1}{2} \nu$. The optimizer for $E_J(\cdot \mid \nu_{-J}^n)$ then must feature a similar oscillation, being of the form $\pi_J^n = \delta_0 \otimes (a \nu_{J}^n + b\nu_{-J}^n)$, see Figure \ref{fig:counterexample_2}; one can show that the optimal parameters are $(a, b) = (\sqrt{2M},0)$.
	
	Indeed, rewriting the primal problem \eqref{eq:cell-primal-problem} using that $\nu_{-J}$ has piecewise constant density yields 
	\begin{equation}
		\min_{a,b\ge 0}
		M \varphi_{\KL}\left(\frac{1}{M}\frac{a+b}{2}\right) 
		+
		\frac{1}{2} (\varphi_{\KL}(a) + \varphi_{\KL}(b+1)).
	\end{equation}
	Finally, by differentiating and applying the Karush-Kuhn-Tucker conditions we find that, for $0<M<1/2$, the only solution is attained at $a = \sqrt{2M}$, $b = 0$.
	
	The weak* limit of these couplings is then given by $\pi_J = \sqrt{M/2}\, \delta_0 \otimes \nu$, which does not constitute the optimizer for the limit problem; instead, by employing again the Karush-Kuhn-Tucker conditions, the optimizer for the limit problem can be shown to be $\pi'_J = (\sqrt{1/16 + M}-1/4)\, \delta_0 \otimes \nu$. 
	
		\begin{figure}[h]
		\centering
		\begin{subfigure}[b]{0.4\textwidth}
			\includegraphics[width=\linewidth]{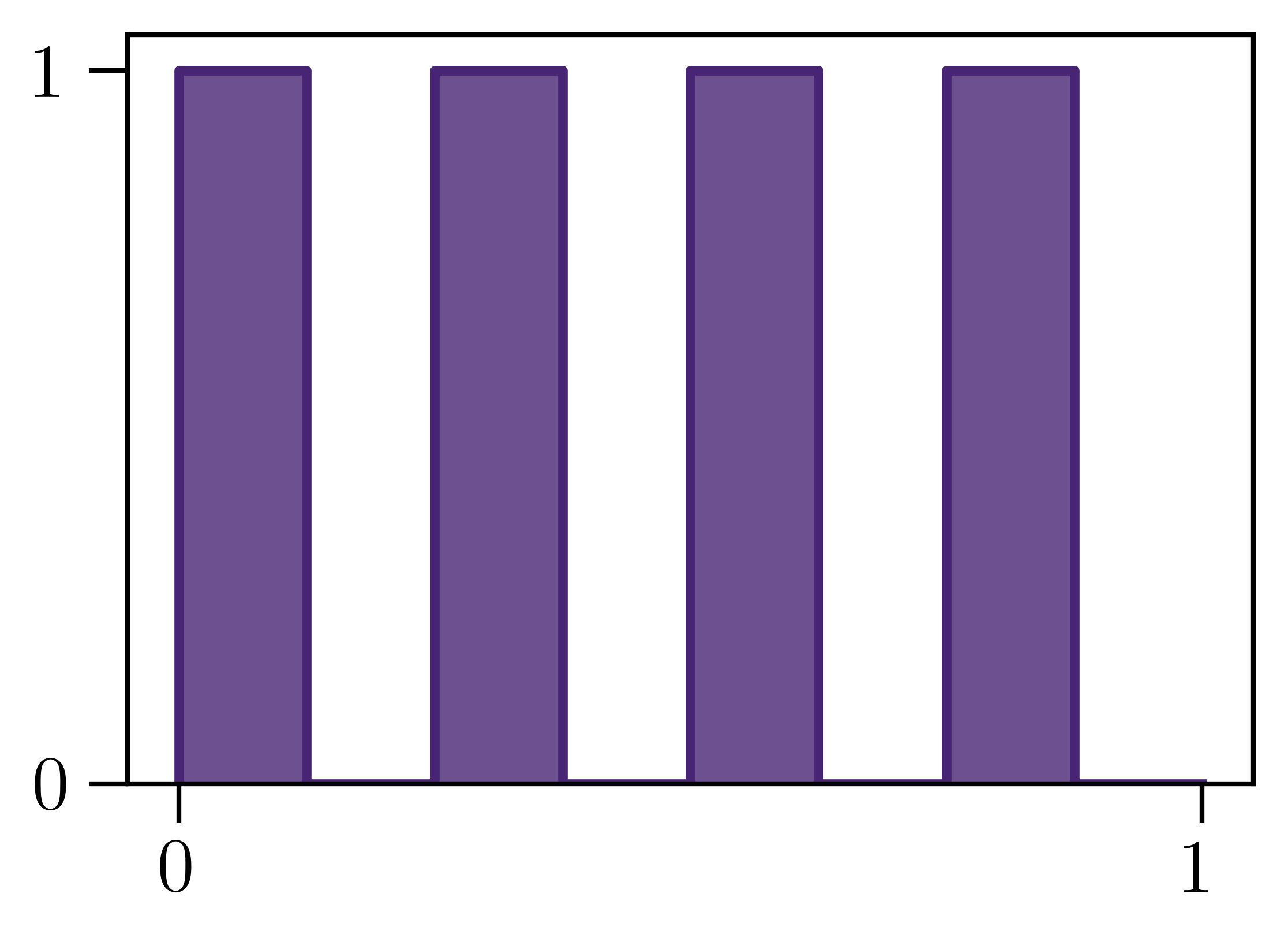}
			\caption{$\nu^{n}_{-J}$}
			\label{fig:counterexample_1}
		\end{subfigure}
		\hspace{15mm}
		\begin{subfigure}[b]{0.4\textwidth}
			\includegraphics[width=\linewidth]{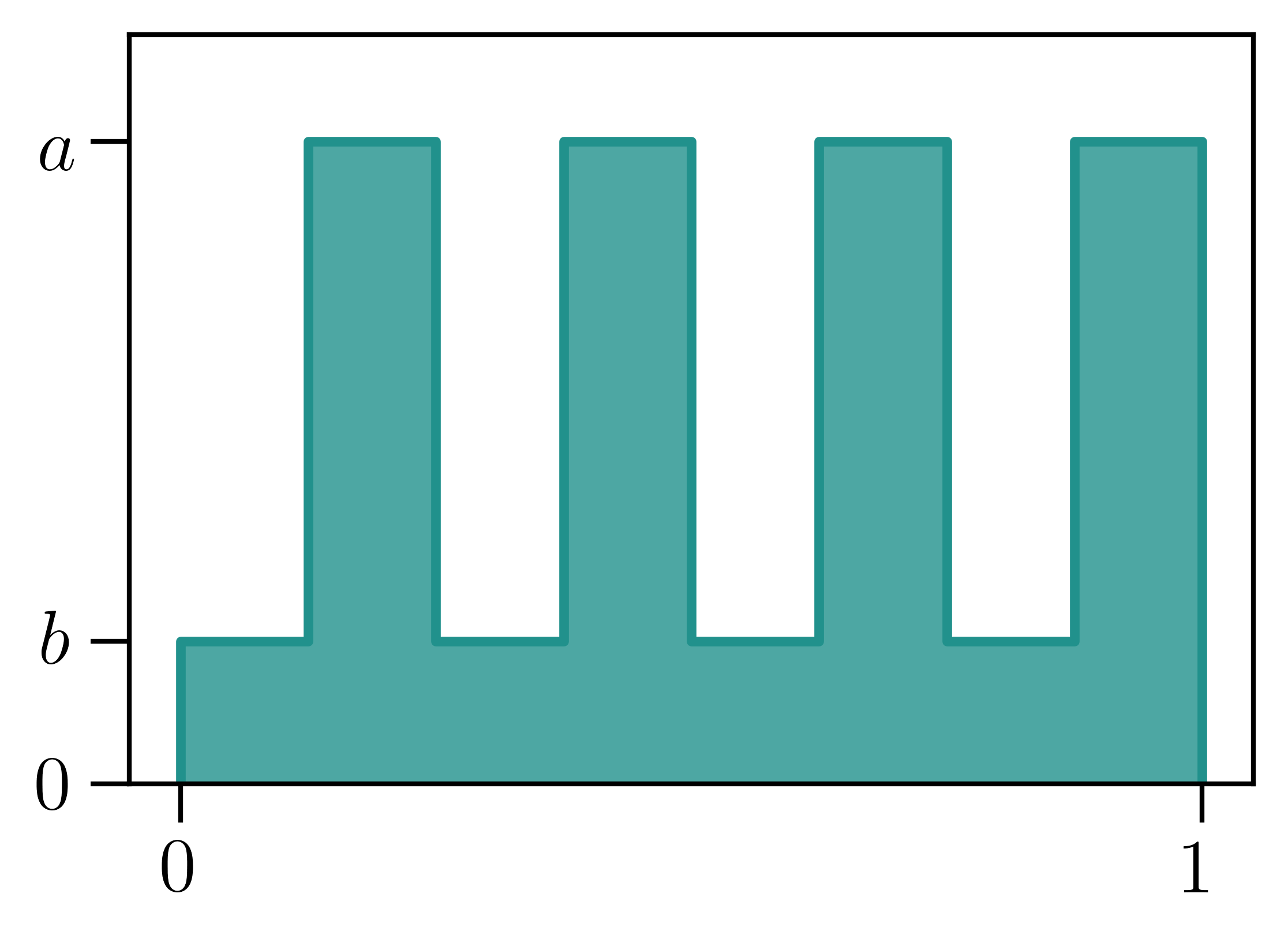}
			\caption{$\pi_J^n$}
			\label{fig:counterexample_2}
		\end{subfigure}
		\caption{Illustration of $\nu^n_{-J}$ and $\pi^{n}_J$ in the counterexample of Remark \ref{remark:continuousY}.}
		\label{fig:counterexample}
	\end{figure}
	
	This lack of stability is a result of the strict lower-semicontinuity of the $\KL$ soft-marginal in the continuous setting. 
	Intuitively, this kind of wildly oscillating sequences seems unlikely to arise in iterations of the domain decomposition algorithm due to the monotonous character of the score. Unfortunately, previous attempts to bound oscillations in a similar context only covered some simple special cases \cite[Section 4.3]{asymptotic_domdec_arxiv}.

\end{remark}

\subsection{Convergence of the sequential algorithm for finitely supported measures}
\label{sec:convergence-sequential}

\begin{proposition}
	\label{prop:convergence-sequential-algorithm}
	Let Assumptions \ref{ass:divergence} and \ref{ass:finite} hold, let $I$ be a basic partition and $(\partA$, $\partB)$ two composite partitions in the sense of Definition \ref{def:partitions}, let $\piInit$ feasible for problem \eqref{eq:unbalanced-entropic-ot}. Then Alg. \ref{alg:DomDec} with iteration subroutine given by Alg. \ref{alg:UnbalancedDomDecSequential} converges to the unique globally optimal minimizer of problem \eqref{eq:unbalanced-entropic-ot}. 
\end{proposition}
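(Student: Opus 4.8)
The plan is to run the standard accumulation-point scheme for block-coordinate descent, with the essential new ingredient that a fixed point of the cell updates is recognized as the global minimizer through the soft $Y$-marginal coupling rather than through any connectivity of $\partA$ and $\partB$. Throughout I would apply Theorem~\ref{theorem:existence-global-solution} with $\nu_-=0$, since \eqref{eq:unbalanced-problem-clean} is the instance of \eqref{eq:unbalanced-problem} without background measure, and it is feasible because $\piInit$ has finite objective.

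First I would record monotonicity and compactness. By Remark~\ref{rk:score-non-increasing} the objective values $E(\iter{\pi}{k})$ are non-increasing, and since $c\ge 0$ and $\varphi_1,\varphi_2\ge 0$ (Assumption~\ref{ass:divergence}) one has $E\ge 0$. The entropic term $\veps\KL(\pi\mid\mu\otimes\nu)$ is coercive in the total mass, so the sublevel set $\{E\le E(\piInit)\}$ is bounded, hence precompact in $\measp(X\times Y)$ as $X\times Y$ is finite. Thus $E(\iter{\pi}{k})\to E^\star$ for some $E^\star\ge 0$, and $(\iter{\pi}{k})_k$ admits accumulation points. Next I would show these are fixed points of a full sweep. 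Let $T_{\partA}$ (resp.\ $T_{\partB}$) denote the map realizing one sequential pass of Algorithm~\ref{alg:UnbalancedDomDecSequential} over $\partA$ (resp.\ $\partB$); as a finite composition of the continuous cell solve maps $S_J$ (Lemma~\ref{lemma:continuity-solve-map}) with continuous restriction and addition, it is continuous where the score is finite. Fixing a parity, extract $\iter{\pi}{k_l}\to\pi^\star$ with $T_{\partA}$ applied next, so $\iter{\pi}{k_l+1}=T_{\partA}(\iter{\pi}{k_l})\to T_{\partA}(\pi^\star)$; passing to the limit in $E(\iter{\pi}{k_l})=E(\iter{\pi}{k_l+1})=E^\star$ gives $E(T_{\partA}(\pi^\star))=E(\pi^\star)$. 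Within the sweep every cell update is a coordinate minimization with a \emph{unique} minimizer (Lemma~\ref{lemma:uniform-bound-duals}), so a net zero decrease forces, inductively over the cells, each update to leave $\pi^\star$ unchanged; hence $S_J(\pi^\star)=\pi^\star\restr(X_J\times Y)$ for every $J\in\partA$.

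The crux is then to show that such a fixed point is globally optimal. For each $J\in\partA$, item~\ref{item:existence-dual} of Lemma~\ref{lemma:uniform-bound-duals} supplies cell duals $(\alpha_J,\beta_J)$ with $\pi^\star\restr(X_J\times Y)=\exp((\alpha_J\oplus\beta_J-c)/\veps)\,\mu_J\otimes\nu$; since $\proj_Y(\pi^\star\restr(X_J\times Y))+\nu_{-J}=\proj_Y\pi^\star$, the $\beta$-condition reads $-\beta_J=\varphi_2'(\RadNik{\proj_Y\pi^\star}{\nu})$, whose right-hand side is independent of $J$. This is exactly where the soft $Y$-marginal forces the local problems to be mutually consistent, replacing the connectivity hypothesis of the balanced theory: all $\beta_J$ coincide with a single $\beta^\star$. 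Because $\partA$ partitions $X$, setting $\alpha^\star(x):=\alpha_J(x)$ for $x\in X_J$ is well defined, and on each $X_J$ the local $X$-density agrees with the global one, giving $-\alpha^\star=\varphi_1'(\RadNik{\proj_X\pi^\star}{\mu})$. Summing the cellwise scalings over $J\in\partA$ yields $\pi^\star=\exp((\alpha^\star\oplus\beta^\star-c)/\veps)\,\mu\otimes\nu$, so $(\alpha^\star,\beta^\star)$ satisfy the optimality system of Theorem~\ref{theorem:existence-global-solution}; by that theorem $\pi^\star=\pi^\dagger$, the unique minimizer. Since every accumulation point equals $\pi^\dagger$ and the iterates stay in a compact set, the whole sequence converges to $\pi^\dagger$.

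I expect the main obstacle to be this last paragraph: reconstructing globally consistent dual potentials from the cell duals---in particular arguing that the fixed-point $Y$-potentials $\beta_J$ all agree---and, on the technical side, justifying that accumulation points are genuine sweep fixed points via the limit passage in the score together with uniqueness of cell minimizers. The continuity furnished by Lemma~\ref{lemma:continuity-solve-map} is what legitimizes this passage to the limit and is the reason Assumption~\ref{ass:finite} cannot be relaxed, cf.\ Remark~\ref{remark:continuousY}.
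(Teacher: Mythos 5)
Your proposal is correct and follows essentially the same route as the paper's proof: monotone score plus finiteness gives compactness of the iterates, continuity of the cell solve maps (Lemma \ref{lemma:continuity-solve-map}) turns cluster points into sweep fixed points via strict convexity, and the shared $Y$-marginal makes all cell $\beta_J$ coincide so the cell duals can be stitched into global potentials satisfying the optimality system of Theorem \ref{theorem:existence-global-solution}. The only cosmetic difference is that you use a single partition at the fixed point while the paper considers $\iterB\circ\iterA$, but the paper itself notes (Remark \ref{rk:only-one-partition-needed}) that one partition suffices in the unbalanced case.
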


\begin{proof}
	The iterates of Algorithm \ref{alg:UnbalancedDomDecSequential} have non-increasing score in \eqref{eq:unbalanced-entropic-ot} by virtue of line \ref{codeline:cell-uot-problem} (see also Remark \ref{rk:score-non-increasing}). Therefore, by using finiteness of $X$ and $Y$, we can bound the density of $\pi^k$ with respect to $\mu \otimes \nu$ uniformly in $k$ by using the score of the initial plan. This means, there exists $C$ such that 
	\begin{equation}
	0 \le \RadNikD{\pi^k}{(\mu\otimes \nu)} \le C
	\quad
	\tn{for all } k\ge 1.
	\end{equation}
	
	This uniform bound on the entries of $\pi^k$ allows to extract cluster points. Since the global score \eqref{eq:unbalanced-entropic-ot} is (in the finite support setting) a continuous function on its domain, all cluster points share the same score.

	Consider the \emph{iteration maps} $\iterA$ and $\iterB$, which apply an $A$ (resp.~$B$) iteration to a coupling $\pi$ (i.e.~they map $\pi^k$ to $\pi^{k+1}$). They are a composition of restriction, projection, solution maps (cf.~Lemma \ref{lemma:continuity-solve-map}), and a sum of measures, all of which are continuous in our setting. 
	Now choose a cluster point $\hat{\pi}$ of the iterates. By continuity of the iteration maps, $\iterB(\iterA(\hat{\pi}))$ is also a cluster point, therefore it has the same score as $\hat{\pi}$. By strict convexity of the cell problems, this means that $\hat{\pi}$ does not change when applying $\iterA$ or $\iterB$ (otherwise the score would have decreased). Hence, $\hat{\pi}$ is optimal for each cell problem, which by item \ref{item:existence-dual} of Lemma \ref{lemma:uniform-bound-duals} means that it features the diagonal scaling form
	\begin{equation}
	\label{eq:cluster-cell-plans}
	\hat{\pi}_J 
	= 
	\exp\left(\frac{\hat{\alpha}_J \oplus \hat{\beta}_J - c}{\veps}\right) \mu_J \otimes \nu
	\quad
	\tn{for all }
	J \in \partA \cup \partB,
	\end{equation}
	where the cell duals satisfy
	\begin{equation}
	\label{eq:cluster-cell-duals}
	\begin{aligned}
	&-\hat{\alpha}_J(x) = \varphi'_1\left(\RadNik{\proj_X \hat{\pi}_J}{\mu_J}(x)\right),
	\ \text{and}
	\\
	&-\hat{\beta}_J(y) 
	= 
	\varphi'_2 \left( \RadNik{\proj_Y \hat{\pi}_J}{\nu}(y)
	+ \RadNik{\hat{\nu}_{-J}}{\nu}(y)\right)
	=
	\varphi'_2 \left( \RadNik{\proj_Y \hat{\pi}}{\nu}(y)
	\right).
	\end{aligned}
	\end{equation}
	Here $\hat{\nu}_{-J}$ is the $Y$-submarginal associated with $\hat{\pi}$, analogous to \eqref{eq:cell-Y-sub-marginal}.
	The latter condition implies that $\hat{\beta}_J$ is actually identical on all cell problems, so we can drop the index $J$ and rename it to $\hat{\beta}$. 
	One can also construct a global dual potential $\alpha$ on $X$ by choosing a partition $\partGeneric$ and, for every $x\in X$, defining $\hat{\alpha}(x) \assign \hat{\alpha}_J(x)$, where $J$ the only composite cell such that $x\in X_J$. In view of this, \eqref{eq:cluster-cell-plans} and \eqref{eq:cluster-cell-duals} become
	\begin{equation}
	\nonumber
	\hat{\pi} 
	= 
	\exp\left(\frac{\hat{\alpha} \oplus \hat{\beta} - c}{\veps}\right) \mu \otimes \nu,
	\quad
	\tn{ with }
	\quad
	\begin{cases}
	-\hat{\alpha}(x) = \varphi_1'\left(
	\RadNikD{\proj_X \hat{\pi}}{ \mu}(x)
	\right) 
	\tn{ for all } x \in X,
	\\[2mm]
	-\hat{\beta}(y) =  \varphi'_2\left(
	\RadNikD{\proj_Y \hat{\pi}}{ \nu}(y)
	\right) 
	\tn{ for all } y \in Y,
	\end{cases}
	\end{equation}
	and by applying Theorem \ref{theorem:existence-global-solution} we conclude optimality of $\hat{\pi}$ for the primal problem \eqref{eq:unbalanced-problem} and $(\hat{\alpha}, \hat{\beta})$ for the dual problem \eqref{eq:dual-problem}.
	This argument can be made for any cluster point $\hat{\pi}$, which therefore all must coincide, and thus the whole sequence of iterates converges to this limit.
\end{proof}

\begin{remark}[Unbalanced domain decomposition needs just one partition]
	\label{rk:only-one-partition-needed}
	In domain decomposition for balanced optimal transport, the two partitions $\partA$ and $\partB$ are required to satisfy a certain joint connectivity property to guarantee convergence (see e.g. \cite[Definition 4.10]{BoSch2020}). 
	No such property is required for the unbalanced variant, since the $Y$-submarginal $\nu_{-J}$  allows communication between the different subproblems, ensuring the consistency of the $Y$-cell duals upon convergence (cf.~\eqref{eq:cluster-cell-duals}).
	However, numerically we still observe a benefit of using two staggered partitions as in \cite{BoSch2020}, see Section \ref{sec:one-partition} for more details.
\end{remark}

\subsection{Parallelization and convergence of parallel algorithm}
\label{sec:domdec-parallel}

One of the key advantages of balanced domain decomposition is the partitioning of the transport problem into a collection of subproblems that can be solved in parallel. 
However, as anticipated in Section \ref{sec:formulation-sequential-parallel}, parallelizing unbalanced domain decomposition is more challenging, since the subproblems are linked by the soft $Y$-marginal penalty.

In this Section we show how to overcome this issue. 
We will prove convergence of Algorithm \ref{alg:UnbalancedDomDecParallel} to the globally optimal coupling, assuming that the function \GetWeights guarantees decrement on each update (if decrement is possible for the sequential algorithm) and that it enjoys a certain regularity, for which we offer several variants.

Proposition \ref{prop:convergence-parallel-domdec} 
gives a sufficient condition for Algorithm \ref{alg:UnbalancedDomDecParallel} to be convergent, namely, that $\GetWeights$ is continuous, and that the the score of the new plan $\pi'$ resulting from applying Algorithm \ref{alg:UnbalancedDomDecParallel} to $\pi$ is always lower than that of $\pi$ (unless $\pi$ is already optimal)
. However, continuity is too restrictive for several interesting choices of $\GetWeights$. Therefore, Proposition 
\ref{prop:convergence-parallel-domdec-extended} proves convergence with an alternative comparison argument: if the considered \GetWeights performs consistently better than a second \GetWeightsSub{0}, and the latter satisfies the assumptions of Proposition \ref{prop:convergence-parallel-domdec}, the former also yields a convergent algorithm.
Finally, Proposition \ref{prop:converging-getweights} outlines several possible choices of \GetWeights satisfying the required properties. These choices will be compared numerically in Section \ref{sec:single-scale}.

\begin{proposition}
	\label{prop:convergence-parallel-domdec}
	Assume that the function \GetWeights (Alg.~\ref{alg:UnbalancedDomDecParallel}, line \ref{codeline:getWeights})
	\begin{enumerate}
		\item \label{item:assumption-continuity} is continuous, and
		\item \label{item:parallel-decrement}
		for every initial plan $\pi \in \measp(X \times Y)$, partition $\partGeneric$, current cell plans $(\pi_J)_{J\in \partGeneric}$ and new cell plans  $(\tilde{\pi}_J)_{J\in \partGeneric} \assign (S_J(\pi))_{J\in \partGeneric}$, the updated plan
		\begin{equation}
		\nonumber
		\bar{\pi}
		\assign  
		\underset{{J\in \partGeneric}}{\sum} (1- \theta_J) \pi_J + \theta_J\tilde{\pi}_{J}, 
		\quad
		\text{with } (\theta_J)_J = \text{\GetWeights}((\pi_J)_J, (\tilde{\pi}_J)_J),
		\end{equation}
		has a global score in \eqref{eq:unbalanced-entropic-ot} that is strictly lower than that of  $\pi$, unless $\pi_J= \tilde{\pi}_J$ for all $J\in \partGeneric$ (in which case $\pi$ is already optimal, see proof of Prop. \ref{prop:convergence-sequential-algorithm}).
	\end{enumerate}
	Then, under the same conditions of Proposition \ref{prop:convergence-sequential-algorithm},  Alg.~\ref{alg:DomDec} with iteration subroutine given by Alg.~\ref{alg:UnbalancedDomDecParallel} converges to the globally optimal solution of problem \eqref{eq:unbalanced-problem}. 
\end{proposition}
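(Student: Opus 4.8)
The plan is to transcribe the convergence argument of Proposition~\ref{prop:convergence-sequential-algorithm}, replacing its block-coordinate-descent monotonicity by the postulated decrement property~\ref{item:parallel-decrement} and using the continuity assumption~\ref{item:assumption-continuity} to keep the relevant iteration maps continuous. First I would note that by~\ref{item:parallel-decrement} the global score in~\eqref{eq:unbalanced-problem-clean} is non-increasing along the iterates $(\pi^k)_k$ produced by Algorithm~\ref{alg:DomDec} with the parallel subroutine (Algorithm~\ref{alg:UnbalancedDomDecParallel}); being bounded below by the finite infimal value, it converges to some $E^\ast$. Exactly as in the sequential proof, on the finite product space this monotonicity bounds the density of $\pi^k$ with respect to $\mu\otimes\nu$ uniformly in $k$ (a bounded score controls the $\KL$-term), so cluster points exist and, by continuity of the score functional, every cluster point $\hat\pi$ satisfies $E(\hat\pi)=E^\ast$.

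Next I would show that any such $\hat\pi$ is a fixed point at which every cell plan is individually optimal. Writing $\iterA$ for the parallel iteration map of Algorithm~\ref{alg:UnbalancedDomDecParallel} associated to a partition $\partA$, it is a composition of restriction, the submarginal map $\mathcal{V}_{-J}$, the solve maps $S_J$, the weight map \GetWeights, and the final convex combination. Each factor is continuous on the relevant set: the measure operations are continuous on finite spaces, $S_J$ is continuous wherever $E_J$ is finite by Lemma~\ref{lemma:continuity-solve-map}, and \GetWeights is continuous by assumption~\ref{item:assumption-continuity}; since cluster points have finite score and hence finite $E_J$, the composite map is continuous there. Choosing a subsequence $\pi^{k_n}\to\hat\pi$ along which the following iteration uses a fixed partition---without loss of generality $\partA$---continuity gives that $\iterA(\hat\pi)=\lim_n\pi^{k_n+1}$ is again a cluster point, whence $E(\iterA(\hat\pi))=E^\ast=E(\hat\pi)$. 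This is where assumption~\ref{item:parallel-decrement} does the decisive work: it guarantees $E(\iterA(\hat\pi))<E(\hat\pi)$ unless $\hat\pi_J=S_J(\hat\pi)$ for all $J\in\partA$, so the absence of a strict decrease forces every $\hat\pi_J$ to already solve its subproblem~\eqref{eq:cell-primal-problem}. I expect this transition---from ``the score does not decrease at $\hat\pi$'' to ``each cell is optimal''---to be the main obstacle, since in the parallel update a spurious fixed point could a priori arise from vanishing weights $\theta_J$ even when $\pi_J\neq S_J(\hat\pi)$; it is precisely condition~\ref{item:parallel-decrement} that rules this out, and its interplay with the continuity needed to make $\iterA(\hat\pi)$ a genuine cluster point is the crux.

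From here the argument closes as in the sequential case. Cellwise optimality and item~\ref{item:existence-dual} of Lemma~\ref{lemma:uniform-bound-duals} give the diagonal-scaling form~\eqref{eq:cluster-cell-plans} together with the dual relations~\eqref{eq:cluster-cell-duals}; the $Y$-dual $\hat\beta_J(y)=\varphi_2'(\RadNik{\proj_Y\hat\pi}{\nu}(y))$ is independent of $J$ and therefore globally consistent, while the $X$-duals glue across the single partition into a global $\hat\alpha$ (so, as in Remark~\ref{rk:only-one-partition-needed}, one partition suffices and no connectivity property is needed). Theorem~\ref{theorem:existence-global-solution} then certifies that $\hat\pi$ is the unique global minimizer of~\eqref{eq:unbalanced-problem}; as this holds for every cluster point, all cluster points coincide and the entire sequence converges to it.
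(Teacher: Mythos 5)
Your proposal is correct and follows essentially the same route as the paper's proof: non-increasing score via the decrement assumption, uniform density bounds and cluster points, continuity of the parallel iteration map (restriction, projection, solve maps, \GetWeights, convex combination), score stagnation at a cluster point forcing cellwise optimality, and then the diagonal-scaling/dual-stitching argument from Proposition~\ref{prop:convergence-sequential-algorithm}. The only cosmetic difference is that you apply a single iteration map along a subsequence with fixed partition rather than composing $\iterA$ and $\iterB$ as the paper does, which is harmless since, as in Remark~\ref{rk:only-one-partition-needed}, optimality on one partition already yields global optimality.
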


\begin{proof} 
	First note that item \ref{item:parallel-decrement} grants that the sequence of plans $(\pi^k)_k$ has a non-increasing score. Thus, analogously to the proof of Proposition \ref{prop:convergence-sequential-algorithm}, we can uniformly bound the density of $(\pi^k)_k$, extract cluster points, and show that all cluster points share the same score. 
	
	The next step is to show continuity of the iterations maps $\mathcal{F}_A$ and $\mathcal{F}_B$. Now they are a composition of restriction, projection, solution maps, the function $\GetWeights$, and sum of measures. All of these functions are continuous in our setting ($\GetWeights$ by item \ref{item:assumption-continuity}), so $\mathcal{F}_A$ and $\mathcal{F}_B$ are continuous as well. As in the proof of Proposition \ref{prop:convergence-sequential-algorithm} this continuity implies that, for any cluster point of the iterates $\hat{\pi}$, $\mathcal{F}_A(\mathcal{F}_B(\hat{\pi}))$ is again a cluster point. This in turn implies that $\mathcal{F}_A(\mathcal{F}_B(\hat{\pi}))$ and $\hat{\pi}$ have the same score, which by item \ref{item:parallel-decrement} can only happen if all cell subproblems are locally optimal.
	
	The proof then concludes as in Proposition \ref{prop:convergence-sequential-algorithm}: local optimality implies that cell couplings are diagonal scalings of their respective cell duals; $Y$-cell duals are all identical and $X$-cell duals can be `stitched' to form a global $X$ dual. The cluster point $\hat{\pi}$ is thus a diagonal scaling of the resulting global duals, which satisfy the global optimality conditions, and therefore also the limit of the whole sequence. 
\end{proof}

The continuity condition required by Proposition \ref{prop:convergence-parallel-domdec} is rather restrictive and does not allow many interesting choices for the \GetWeights\ function. In the next proposition we show that we can also use non-continuous \GetWeights\ functions, as long as they perform better (or equal) than some continuous one.

\begin{proposition}
	\label{prop:convergence-parallel-domdec-extended}
	Denote by $E$ the objective function of the regularized unbalanced minimization problem \eqref{eq:unbalanced-entropic-ot}.
	Let \GetWeightsSub{0} be a function satisfying the assumptions of Proposition \ref{prop:convergence-parallel-domdec}, and \GetWeights a different function performing better than or as well as the former, i.e., such that for every given plan $\pi \in \measp(X \times Y)$, partition $\partGeneric$, current cell plans $(\pi_J)_{J\in \partGeneric}$ and new cell plans  $(\tilde{\pi}_J)_{J\in \partGeneric} \assign (S_J(\pi))_{J\in \partGeneric}$, it holds $
	E(\bar{\pi}) \le E(\bar{\pi}_0)$, with
	\begin{equation}
	\begin{cases}
	\bar{\pi}
	\assign  
	\underset{{J\in \partGeneric}}{\sum} (1- \theta_J) \pi_J + \theta_J\tilde{\pi}_{J},
	\ 
	(\theta_J)_J = \text{\GetWeights}((\pi_J)_J, (\tilde{\pi}_J)_J),\\
	\bar{\pi}_0
	\assign  
	\underset{{J\in \partGeneric}}{\sum} (1- \theta^0_J) \pi_J + \theta^0_J\tilde{\pi}_{J},
	\ 
	(\theta_J^0)_J = \text{\GetWeightsSub{0}}((\pi_J)_J, (\tilde{\pi}_J)_J).
	\end{cases} 
	\end{equation}
	Then, under the same conditions as in Proposition \ref{prop:convergence-sequential-algorithm}, Algorithm \ref{alg:UnbalancedDomDecParallel} yields a convergent domain decomposition algorithm. 
\end{proposition}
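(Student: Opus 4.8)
The plan is to reuse the cluster-point argument from the proof of Proposition \ref{prop:convergence-parallel-domdec}, but to circumvent the possibly missing continuity of \GetWeights by sandwiching the true iterates between the monotone score floor and the \emph{continuous} surrogate iteration map built from \GetWeightsSub{0}.

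First I would check that the iterates $(\pi^k)_k$ produced by Algorithm \ref{alg:DomDec} with parallel subroutine Algorithm \ref{alg:UnbalancedDomDecParallel} using \GetWeights still have non-increasing score. Indeed, writing $\pi = \pi^k$ and $\tilde{\pi}_J = S_J(\pi)$, the comparison hypothesis gives $E(\pi^{k+1}) = E(\bar{\pi}) \le E(\bar{\pi}_0)$, while the decrement property of \GetWeightsSub{0} (item \ref{item:parallel-decrement} of Proposition \ref{prop:convergence-parallel-domdec}) gives $E(\bar{\pi}_0) \le E(\pi^k)$, so $E(\pi^{k+1}) \le E(\pi^k)$. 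Exactly as in Proposition \ref{prop:convergence-sequential-algorithm}, this monotone bounded score yields a uniform density bound on $(\pi^k)_k$, so cluster points exist, the score sequence converges to some value $E^*$, every cluster point has score $E^*$ by continuity of $E$, and in particular $E(\pi^k) \ge E^*$ for all $k$.

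The crux is to show that any cluster point $\hat{\pi}$ is optimal without invoking continuity of the \GetWeights iteration map. For each partition $\partGeneric \in \{\partA, \partB\}$, let $\mathcal{F}^0_\partGeneric$ denote the iteration map obtained by running Algorithm \ref{alg:UnbalancedDomDecParallel} with \GetWeightsSub{0}; this map is continuous, being a composition of restriction, projection, the solve maps $S_J$ (Lemma \ref{lemma:continuity-solve-map}, applicable since the finite-score iterates stay in the domain of continuity), the continuous function \GetWeightsSub{0}, and a sum of measures. Given $\hat{\pi}$, pick a subsequence $\pi^{k_j} \to \hat{\pi}$ whose indices share a common parity, so every step $k_j \to k_j + 1$ uses a single fixed partition $\partGeneric$. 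The comparison hypothesis applied at each $\pi^{k_j}$ gives $E(\pi^{k_j+1}) \le E(\mathcal{F}^0_\partGeneric(\pi^{k_j}))$, and continuity of $\mathcal{F}^0_\partGeneric$ and of $E$ send the right-hand side to $E(\mathcal{F}^0_\partGeneric(\hat{\pi}))$. If $\hat{\pi}$ were not locally optimal for $\partGeneric$, item \ref{item:parallel-decrement} for \GetWeightsSub{0} would force $E(\mathcal{F}^0_\partGeneric(\hat{\pi})) < E(\hat{\pi}) = E^*$, whence $\limsup_j E(\pi^{k_j+1}) < E^*$, contradicting $E(\pi^{k_j+1}) \ge E^*$. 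Hence $\hat{\pi}_J = S_J(\hat{\pi})$ for all $J \in \partGeneric$.

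From here the conclusion is identical to Proposition \ref{prop:convergence-sequential-algorithm}: local optimality on a single partition already forces the diagonal-scaling form \eqref{eq:cluster-cell-plans}, with $Y$-cell duals that coincide across cells because they depend only on $\proj_Y \hat{\pi}$ (cf.~\eqref{eq:cluster-cell-duals}) and $X$-cell duals that stitch over $\partGeneric$ into a single global potential, so that Theorem \ref{theorem:existence-global-solution} certifies $\hat{\pi}$ as the global optimizer of \eqref{eq:unbalanced-problem-clean}. By uniqueness of this optimizer all cluster points coincide with it, so the whole sequence converges. I expect the main obstacle to be precisely the potential discontinuity of \GetWeights, which is defused by this sandwiching: continuity of the \emph{actual} iteration map is never required, only that of the comparison surrogate \GetWeightsSub{0} together with the pointwise inequality $E(\bar{\pi}) \le E(\bar{\pi}_0)$.
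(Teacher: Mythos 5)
Your proposal is correct and follows essentially the same route as the paper's own proof: establish the monotone score via the sandwich $E(\pi^{k+1})\le E(\bar{\pi}_0)\le E(\pi^k)$, exploit continuity of the surrogate iteration map built from \GetWeightsSub{0} along a subsequence with fixed partition, conclude local optimality of any cluster point on that single partition, and finish as in Proposition \ref{prop:convergence-sequential-algorithm} using that one partition suffices (Remark \ref{rk:only-one-partition-needed}). The only cosmetic difference is that you phrase the limit step as a contradiction with the score floor $E^*$ rather than directly concluding $E(\mathcal{F}^0(\hat{\pi}))=E(\hat{\pi})$, which is the same argument.
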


\begin{proof}
	Once again the iterates have a non-increasing score by assumption, so we can bound the density of the iterates $\pi^k$ uniformly in $k$, extract cluster points, and show that all cluster points share the same score. 
	
	Now let $\mathcal{F}_A$ and $\mathcal{F}_B$ be the maps that perform an $A$ and $B$ iteration with the \GetWeights{} auxiliary function, and $\mathcal{F}_A^0$ and $\mathcal{F}_B^0$ those based on \GetWeightsSub{0}. Of course this time $\mathcal{F}_A$ and $\mathcal{F}_B$ cannot be guaranteed to be continuous because \GetWeights\ might not be. On the other hand, thanks to the continuity of $\GetWeights_0$, it turns out that $\mathcal{F}_A^0$ and $\mathcal{F}_B^0$ are continuous, and by the assumption that \GetWeights performs better or equal than $\GetWeights_0$ we are guaranteed that, for $(\pi^{k_n})_n$ a converging subsequence,
	\begin{equation}
	E(\pi^{k_n+1})
	=
	E(\mathcal{F}_C(\pi^{k_n})) \le E(\mathcal{F}_C^0(\pi^{k_n})) \le E(\pi^{k_n}),
	\end{equation}
	where $C \in \{A,B\}$ stands for the partition corresponding to iteration $k_n$.
	Up to the extraction of a subsequence we can assume that all $C$ iterations are either $A$ or $B$ iterations; let us assume without loss of generality that they are $A$ iterations. Then, taking limits (and using that $\mathcal{F}_A^0$ is continuous and the score is monotonously decreasing) we conclude that
	\begin{equation}
	E(\mathcal{F}_A^0(\hat{\pi})) = E(\hat{\pi}),
	\end{equation}
	which can only hold if $\hat{\pi}$ is already optimal on each cell of partition $A$. The rest of the proof follows analogously to that of Proposition \ref{prop:convergence-sequential-algorithm}, noting that optimality on one partition is enough for showing global optimality (cf.~Remark \ref{rk:only-one-partition-needed}).
\end{proof}

There are several canonical choices to build the function \GetWeights that fulfill the assumptions imposed by either Proposition \ref{prop:convergence-parallel-domdec} or \ref{prop:convergence-parallel-domdec-extended}:

\begin{proposition}
	\label{prop:converging-getweights}
	
	Consider the following choices for $\GetWeights$:
	
	\begin{enumerate}
		\item \GetWeightsSub{safe} $\equiv (1/|\partGeneric|, ..., 1/ |\partGeneric|)$, regardless of the input. 
		\item \GetWeightsSub{swift}: Choose between  $(\theta_J)_{J\in\partGeneric} = (1/|\partGeneric|, ..., 1/ |\partGeneric|)$ and \linebreak $(\theta_J)_{J\in\partGeneric} = (1, ..., 1)$ the one that produces the larger decrement of the score.
		\item \GetWeightsSub{opt}: Optimize over $(\theta_J)_{J\in\partGeneric} \in [0,1]^{|\partGeneric|}$ to produce the best possible decrement.
		\item \GetWeightsSub{line}: Find the scalar $\zeta\in [1/|\partGeneric|,  1]$ that optimizes the decrement by $(\theta_J)_{J\in\partGeneric} = (\zeta, ..., \zeta)$.
	\end{enumerate}
	Then, \GetWeightsSub{safe} satisfies the assumptions of Proposition \ref{prop:convergence-parallel-domdec}, while\linebreak \mbox{\GetWeightsSub{swift}},  \GetWeightsSub{opt} and \GetWeightsSub{line} satisfy those of Proposition \ref{prop:convergence-parallel-domdec-extended} (taking $\GetWeights_{0} =\,$\GetWeightsSub{safe}).
	As a consequence, any of these choices makes Algorithm \ref{alg:UnbalancedDomDecParallel} convergent.
\end{proposition}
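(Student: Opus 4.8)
The plan is to verify, for each of the three proposed choices, the specific hypotheses demanded by the proposition it is claimed to satisfy, relying throughout on the fact (established in the proof of Proposition \ref{prop:convergence-sequential-algorithm}) that a cell is already optimal precisely when $\pi_J = \tilde\pi_J = S_J(\pi)$. The crux is that \GetWeightsSub{safe} serves as the continuous ``anchor'' of Proposition \ref{prop:convergence-parallel-domdec}, against which the two greedier rules are compared via Proposition \ref{prop:convergence-parallel-domdec-extended}, so the bulk of the work is to dispatch \GetWeightsSub{safe} carefully and then leverage it.

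For \GetWeightsSub{safe}, continuity (item \ref{item:assumption-continuity}) is immediate since the weights are the constant vector $(1/|\partGeneric|,\dots,1/|\partGeneric|)$. The decrement property (item \ref{item:parallel-decrement}) is the substantive point. I would argue as follows. The updated plan is $\bar\pi = \tfrac1{|\partGeneric|}\sum_{J}\big[(|\partGeneric|-1)\pi_J + \tilde\pi_J\big]$, which can be rewritten as the uniform average over $J$ of the plans $\rho_J \assign \tilde\pi_J + \sum_{J'\neq J}\pi_{J'}$; each $\rho_J$ is the global plan with only cell $J$ replaced by its local optimizer. By convexity of the global objective $E$ in \eqref{eq:unbalanced-problem-clean} (the cost term is linear and the $\KL$ and divergence terms are convex, per Proposition \ref{lsc}), Jensen's inequality gives $E(\bar\pi) \le \tfrac1{|\partGeneric|}\sum_J E(\rho_J)$. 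Each $\rho_J$ is exactly the plan produced by a single sequential cell update on cell $J$, whose local subproblem $E_J(\cdot\mid\nu_{-J})$ is strictly convex (Lemma \ref{lemma:uniform-bound-duals}, item \ref{item:existence-primal}); hence $E(\rho_J) \le E(\pi)$ with equality iff $\pi_J = \tilde\pi_J$. Averaging yields $E(\bar\pi)\le E(\pi)$, and if some cell genuinely improves then that term is strictly smaller, giving strict global decrement unless $\pi_J=\tilde\pi_J$ for every $J$. This matches item \ref{item:parallel-decrement} exactly.

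For \GetWeightsSub{swift} and \GetWeightsSub{opt} I would invoke Proposition \ref{prop:convergence-parallel-domdec-extended} with $\GetWeights_0 = {}$\GetWeightsSub{safe}, which has just been shown to satisfy Proposition \ref{prop:convergence-parallel-domdec}. The only thing to check is the comparison inequality $E(\bar\pi)\le E(\bar\pi_0)$. For \GetWeightsSub{opt} this is tautological: it minimizes $E$ over all admissible weight vectors $(\theta_J)_J\in[0,1]^{|\partGeneric|}$, and the uniform vector used by \GetWeightsSub{safe} is one such candidate, so the optimized value is no larger. For \GetWeightsSub{swift} the rule selects the better of the two candidates $(1/|\partGeneric|,\dots)$ and $(1,\dots,1)$; since the first candidate is precisely the \GetWeightsSub{safe} choice, the selected value again satisfies $E(\bar\pi)\le E(\bar\pi_0)$. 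Both choices therefore meet the hypothesis of Proposition \ref{prop:convergence-parallel-domdec-extended}, and convergence of Algorithm \ref{alg:UnbalancedDomDecParallel} follows.

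I expect the main obstacle to be the decrement argument for \GetWeightsSub{safe}, specifically identifying the convex-combination update as an average of single-cell sequential updates and then combining Jensen's inequality with the strict convexity of the individual cell problems to pin down the equality case. Everything else is either definitional (continuity of the constant map) or a one-line optimality comparison (\GetWeightsSub{opt} and \GetWeightsSub{swift}), so once the \GetWeightsSub{safe} decrement is in hand the remaining claims follow with little additional effort.
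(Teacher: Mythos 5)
Your proposal is correct and follows essentially the same route as the paper: the uniform-weight update is rewritten as the average of the single-cell sequential updates $\pi-\pi_J+\tilde{\pi}_J$, Jensen's inequality plus the local optimality of each $\tilde{\pi}_J$ gives $E(\bar{\pi})\le E(\pi)$, strict convexity of the cell subproblems pins down the equality case, and the claims for \GetWeightsSub{swift} and \GetWeightsSub{opt} reduce to the trivial comparison with \GetWeightsSub{safe}. No gaps; this matches the paper's argument.
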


\begin{proof} Since \GetWeightsSub{safe} is trivially continuous, and the other functions by construction perform at least as well as \GetWeightsSub{safe}, all that remains to be shown is that \GetWeightsSub{safe} induces a decrement (item \ref{item:parallel-decrement} of the assumptions in Proposition \ref{prop:convergence-parallel-domdec}).
	
	Let $\pi$ be a given plan, $\partGeneric$, be a partition and $\tilde{\pi}_J = S_J(\pi)$ for each $J$ in $\partGeneric$. The updated plan according to \GetWeightsSub{safe} is given by
	\begin{equation*}
	\bar{\pi}
	=
	\sum_{J \in \mathcal{J}}
	\left(
	1 - \frac{1}{|\partGeneric|}
	\right)\pi_J
	+
	\frac{1}{|\partGeneric|}
	\tilde{\pi}_J.
	\end{equation*}
	Let us show that unless all cell plans are locally optimal, $\bar{\pi}$ has a strictly lower score than  $\pi$. First we perform the trivial manipulation (below $E$ is the objective in \eqref{eq:unbalanced-entropic-ot}):
	\begin{align*}
	E(\bar{\pi})
	&=
	E\left(
	\sum_{J \in \mathcal{J}}
	\left(
	1 - \frac{1}{|\partGeneric|}
	\right)\pi_J
	+
	\frac{1}{|\partGeneric|}
	\tilde{\pi}_J
	\right)
	=
	E\left(
	\frac{1}{|\partGeneric|}
	\sum_{J \in \mathcal{J}}
	\pi-\pi_J+\tilde{\pi}_J
	\right)
	\\
	\intertext{Now by Jensen's inequality, and then the optimality of $\tilde{\pi}_J$ on its subproblem:}
	&\le 
	\sum_{J \in \mathcal{J}}
	\frac{1}{|\partGeneric|}
	E(\pi-\pi_J+\tilde{\pi}_J)
	\le 
	\sum_{J \in \mathcal{J}}
	\frac{1}{|\partGeneric|}
	E(\pi)
	= E(\pi).
	\end{align*}
	Crucially, by strict convexity of each cell subproblem, the last inequality can only turn into an equality if $\pi_J$ is already optimal for each subproblem. Thus, the update given by \GetWeightsSub{safe} fulfills the assumptions of Proposition \ref{prop:convergence-parallel-domdec}. 
	
\end{proof}

\section{Numerical experiments}
\label{sec:numerics}
All the experiments in this Section may be reproduced with the code available online\footnote{\scriptsize \url{https://github.com/OTGroupGoe/DomainDecomposition/tree/main/unbalanced-domdec-paper}}.

\subsection{Implementation details}
\label{sec:implementation}
\paragraph{Choice of cost and divergence}
In our experiments we let $c(x,y)=|x-y|^2$, which is by far the most relevant choice. Adaptation to other costs is straight-forward. Moreover, we choose $D_1 = D_2 = \lambda \KL$, which is one of the most common divergences for unbalanced transport \cite{Frogner2015,ChizatUnbalanced2016,Liero2018} and satisfies Assumption \ref{ass:divergence}.
(We expect that the steps described below can easily be adapted to other appropriate entropy functions.)
For primal and dual candidates $\pi_J, (\alpha_J, \beta_J)$ the optimality conditions \eqref{eq:diagonal-scaling} become
\begin{equation}
\label{eq:optimality-conditions-KL}
\exp\left(-\frac{\alpha_J}{\lambda}\right) = \RadNikD{\proj_X \pi_J}{\mu_J}
\quad
\tn{and}
\quad 
\exp\left(-\frac{\beta_J}{\lambda}\right)
= 
\RadNikD{\proj_Y \pi_J}{\nu}
+ \RadNikD{\nu_{-J}}{\nu}.
\end{equation}

It can be shown that for this problem $\sqrt{\lambda}$ acts as characteristic length-scale: for distances much smaller than $\sqrt{\lambda}$ the problem behaves similar to a balanced transport problem, for much larger distances virtually no transport occurs and it is cheaper to accept the marginal penalty instead. In the following experiments we will therefore mostly give the value of $\sqrt{\lambda}$ for better interpretability.

\paragraph{Solving cell subproblems} We solve the dual cell subproblems with the Sinkhorn algorithm for unbalanced transport \cite{ChizatUnbalanced2016}. For a given initialization $\beta^0$, the Sinkhorn iterations are given as the solutions to the following equations for $\ell \geq 0$:
\begin{equation}
\label{eq:sinkhorn-X-iteration}
\exp\left(-\tfrac{(\veps + \lambda)\alpha^{\ell+1}(x)}{\veps\lambda} \right)
=
\int_Y
\exp\left(\tfrac{\beta^\ell(y) - c(x,y)}{\veps} \right)\diff \nu(y)
\end{equation}
and
\begin{equation}
\label{eq:sinkhorn-Y-iteration}
\exp\left(-\tfrac{\beta^{\ell+1}(y)}{\veps} \right)
\left[
\exp\left(-\tfrac{\beta^{\ell+1}(y)}{\lambda}\right)
- \RadNik{\nu_{-J}(y)}{\nu}
\right]
=
\int_{X_J}\hspace{-1.5mm}
\exp\left(\tfrac{\alpha^{\ell+1}(x) - c(x,y)}{\veps} \right)\diff \mu_J(x).
\end{equation}
With these we associate the primal iterates for $\ell \geq 1$:
\begin{equation}
\label{eq:sinkhorn-primal}
\pi^{\ell} = \exp\left(\frac{\alpha^{\ell} \oplus \beta^{\ell}-c}{\veps}\right) \cdot \mu \otimes \nu.
\end{equation}
The $X$-half iteration \eqref{eq:sinkhorn-X-iteration} is straightforward to solve and yields the classical Sinkhorn iteration for $\KL$-regularized unbalanced transport \cite[Table 1]{ChizatUnbalanced2016}. 
Unfortunately, due to the $Y$-submarginal term involving $\nu_{-J}$, the $Y$-half iteration \eqref{eq:sinkhorn-Y-iteration} cannot be solved in closed form.
Nevertheless, since for each $y$ \eqref{eq:sinkhorn-Y-iteration} corresponds to the minimization of a one-dimensional convex function, it can be solved efficiently with the Newton algorithm.  
To avoid numerical over- and underflow it is convenient to reformulate the problem in the log-domain. Introducing $z^{\ell+1}(y) := \log \int_{X_J} \exp[(\alpha^{\ell+1} - c(\cdot, y))/\veps] \diff \mu_J$, we can rewrite \eqref{eq:sinkhorn-Y-iteration} as
\begin{equation}
\label{eq:proxdiv-problem}
\log\left[
\RadNikD{\nu_{-J}}{\nu}(y)
+
\exp
\left(
\frac{\beta^{\ell+1}(y)}{\veps}
+z^{\ell+1}(y)\right)
\right]
+\frac{\beta^{\ell+1}(y)}{\lambda}
= 0,
\end{equation} 
which can be computed in a log-stabilized way.

\paragraph[Computing and handling nu\_\{-J\}]{Computing and handling $\nu_{-J}$} For a given plan $\pi$, the $Y$-submarginal at cell $J$ is given by $\nu_{-J} = \proj_Y (\pi - \pi_J)$. This can be computed efficiently if our implementation stores the basic cell $Y$-marginals $(\nu_i)_{i\in I}$ (cf. Section \ref{sec:domdec}), since
\begin{equation}
\nonumber
\nu_J \assign \sum_{i\in J} \nu_i 
\quad \tn{for each $J\in \partGeneric$,}
\quad
\proj_Y\pi = \sum_{J\in \partGeneric} \nu_J,
\quad
\tn{and therefore}
\quad
\nu_{-J}
=
\proj_Y \pi - \nu_J.
\end{equation}
Therefore it is sufficient to compute the global marginal $\proj_Y \pi$ once per iteration.
In practice we represent the marginals $(\nu_J)_{J\in \partGeneric}$ as measures with sparse support by truncating very small values as described in \cite{BoSch2020}. This implies that $\nu_{-J}$ equals $\proj_Y\pi$ outside of a sparse set.
When we are close to optimality (as for example in a coarse-to-fine algorithm), then we expect that changes to $\nu_J$ are confined to the support of $\nu_J$. (Other changes to $\nu_J$ can still occur by mass traveling between cells when alternating between the composite partitions.) In this case it then suffices to only consider the restriction of $\nu_{-J}$ to the sparse support of $\nu_J$ in the cell problem.
The validity of this strategy is confirmed by the numerical experiments in Section \ref{sec:multiscale}.

\paragraph{Stopping criterion for the Sinkhorn algorithm on the cell problems}
The Sinkhorn algorithm will usually not terminate with an exact solution after a finite number of iterations. An approximate stopping criterion has to be chosen. In the case of unbalanced transport the primal dual gap between $\pi^\ell$, \eqref{eq:sinkhorn-primal} and $(\alpha^\ell,\beta^\ell)$ can be used (this does not work in the balanced case, since the primal iterate will typically not be a feasible coupling).

Using the identities \eqref{eq:sinkhorn-X-iteration} to \eqref{eq:sinkhorn-primal} one finds (after a lengthy computation):
\begin{align}
\label{eq:PD-gap}
\PD(\pi_J^\ell, (\alpha_J^\ell, \beta_J^\ell) \mid \nu_{-J})
&\assign
E(\pi_J^\ell \mid \nu_{-J}) - D(\alpha_J^\ell, \beta_J^\ell \mid \nu_{-J})
\\
&=\lambda \KL(\proj_X \pi_J^\ell \mid e^{-\alpha_J^\ell/\lambda} \mu_J).
\label{eq:PD-gap-KL}
\end{align}
We will use $\PD(\pi_J^\ell, (\alpha_J^\ell, \beta_J^\ell) \mid \nu_{-J})/\lambda \leq \delta$ for some tolerance threshold $\delta$ as stopping criterion, which is appealing for several reasons.
First, by \eqref{eq:PD-gap-KL}, it measures how far $\proj_X\pi_J^k$ and $\alpha_J^k$ are from verifying their primal-dual optimality condition \eqref{eq:optimality-conditions-KL}. Second, it does not blow up in the limit $\lambda \rightarrow \infty$, but tends to $\KL(\proj_X \pi_J^\ell \mid \mu_J)$, which quantifies the $X$-marginal constraint violation. Finally, by letting $\nu_{-J} = 0$ and $J = I$ (i.e.~there is only a single partition cell), \eqref{eq:PD-gap-KL} also applies to the global problem.

We found that the choice of the tolerance $\delta$ is a delicate question, especially for intermediate values of $\lambda$.
Figure \ref{fig:convergence-plateau} shows the evolution of the marginal error for a global (i.e.~no domain decomposition is involved in this plot) unbalanced problem with $\lambda \KL$ soft marginal.
We observe two stages of convergence. First, the error \eqref{eq:PD-gap-KL} decreases at a linear rate, which roughly matches that of the balanced problem ($\lambda = \infty$). However, once a certain threshold (which decreases as $\lambda$ increases) is reached, the convergence rate deteriorates noticeably. The effect is more pronounced for larger $\lambda$. As a result, the number of iterations needed for achieving a given tolerance (shown in Figure \ref{fig:convergence-plateau}, right) depends non-trivially on $\lambda$.
\begin{figure}[bt]
	\includegraphics[width=0.99\linewidth]{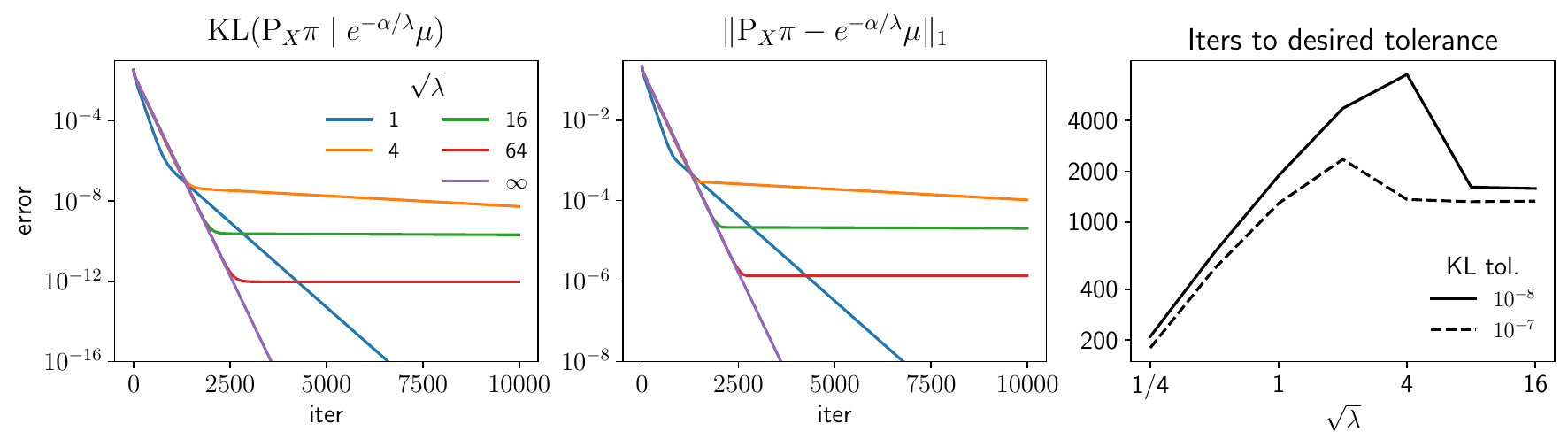}
	\caption{The left and center panels show the evolution of two notions of the marginal error for an unbalanced transport problem between two images of size $256 \times 256$ and regularization strength $\veps = (8\Delta x)^2$ (where $\Delta x=1/256$ is the image grid resolution). The $\KL$ criterion on the left is the primal-dual gap, normalized by $\lambda$ (see \eqref{eq:PD-gap-KL}), while the $\LL^1$ criterion in the center is a heuristic adaptation of the $\LL^1$ error that is often used for balanced transport.
	For finite $\lambda$, after an initial stage of fast convergence the rate decreases sharply.
	On the right, we show the impact that the plateaus in the left and center plots have on the number of iterations needed to reach a given tolerance for different ranges of $\lambda$. 
	}
	\label{fig:convergence-plateau}
\end{figure}

We emphasize that his behavior concerns both the global unbalanced problem (corresponding to $\nu_{-J} = 0$) as well as the domain decomposition cell problems. It was already observed in \cite[Figure 5]{ChizatUnbalanced2016} for a fixed value of $\lambda$. Therefore, understanding the cause of this behavior remains an interesting topic for future investigations.
In the meantime, as a practical remedy we recommend to numerically explore the position of the plateau in a given set of problems and then choose $\lambda$ in a compromise of acceptable precision and fast convergence.

\paragraph{Balancing} 
In balanced domain decomposition \cite{BoSch2020}, the Sinkhorn algorithm for the cell problems is terminated after a $Y$-half iteration, which guarantees that $\nu_J^k = \nu_J^{k-1}$ and thus the global $Y$-marginal $\proj_Y \pi^k = \sum_J\nu_J^k$ is preserved during the execution of the algorithm.
But some marginal error occurs on the $X$-marginal, which means that in general $ |\nu_i^k| \neq |\mu_i| $ and these errors will accumulate over time. In \cite{BoSch2020} a \emph{balancing} step is introduced to correct this marginal error by transferring a small amount of mass between basic cells within a composite cell, in such a way that $\nu_J^k$ is preserved but also $|\nu_i^k| = |\mu_i|$ for each $i\in J$ at the end of every domain decomposition iteration.

In unbalanced transport, since the marginal constraint is replaced by a more flexible marginal penalty, one might hope that such balancing procedure were unnecessary. However, as $\lambda\to\infty$ the problem will behave increasingly similar to the balanced limit, which means that it becomes increasingly sensitive to small mass fluctuations in the basic cells.
Indeed we found that in practice an adapted balancing step also improves convergence of unbalanced domain decomposition.

We chose the following adaptation.
For cell $J \in \partGeneric$, let $(\alpha_J, \beta_J)$ be the final Sinkhorn iterates and let $(\nu_i^k)_{i\in J}$ be the basic cell marginals. Then perform an additional Sinkhorn half-iteration to get the $X$ marginal of the subsequent plan, that we denote by $\bar{\mu}_J$. Next, for each $i\in J$ introduce
\begin{equation}
\label{eq:balancing}
m_i \assign 
\bar{\mu}_J(X_i)
\frac{\nu_J^k(Y)}{\bar{\mu}_J(X_J)}.
\end{equation}
The balancing step now shifts mass horizontally (i.e., without changing its $y$-coordinate) between the basic cells in $J$ such that $|\nu_i^k| = m_i$ for all $i\in J$. This is possible since $\sum_{i\in J} m_i = |\nu_J^k| = \sum_{i\in J} |\nu_i^k|$. 
For balanced transport this reduces to the scheme of \cite{BoSch2020}, since in this case $\bar{\mu}=\mu$ and therefore $m_i=\mu(X_i)$.
The intuition for unbalanced transport is that $\bar{\mu}$, obtained after an $X$-half iteration, is a better estimate of the (unknown) optimal $X$-marginal than $\proj_X \pi^k$, which was obtained after a $Y$-half iteration. Hence, adjusting the basic cell masses to agree with $\bar{\mu}$ yields faster convergence. This intuition is confirmed by numerical experiments, especially for large $\lambda$.

\subsection{Different parallelization weight functions}
\label{sec:single-scale}
In this section we compare different ways and weight functions to parallelize the partition for loop in Algorithm \ref{alg:UnbalancedDomDecParallel}. All mentioned versions of the $\GetWeights$ function were defined in \eqref{eq:GWFast} and Proposition \ref{prop:converging-getweights}. The methods and their shorthands are as follows:
	
	\texttt{sequential}: The sequential Algorithm \ref{alg:UnbalancedDomDecSequential}.
	
	\texttt{safe}: The parallel algorithm with \GetWeightsSub{safe}. This is straightforward to implement but the stepsize becomes impractically small as the partition size increases.
	
	\texttt{greedy}: The parallel algorithm with the naive greedy choice \GetWeightsSub{greedy} that always returns $(1, ..., 1)$, \eqref{eq:GWFast}. This is \emph{not} guaranteed to converge according to our theory and it will also frequently not converge in practice.
	
	\texttt{swift}: The parallel algorithm with \GetWeightsSub{swift}, which allows a greedy update whenever it is beneficial, and falls back to \GetWeightsSub{safe} otherwise.
	
	\texttt{opt}: The parallel algorithm with \GetWeightsSub{opt}. This is guaranteed to yield the largest possible instantaneous decrement, but it can become computationally expensive to solve the weight optimization problem.
	
	The main hurdle consists in computing and evaluating the score of the global plan. Note that in the rest of the algorithm the global plan can be stored in a very compact sparse structure, as briefly explained in Section \ref{sec:implementation} (see \cite{BoSch2020} and \cite{hybrid} for additional details). 
	The sparsity of the plan is also undermined by the \texttt{opt} strategy, since the cell update is computed as a convex combination of former and new cell plans, and thus typically features more non-negligible entries than either the former or the newly found optimal plan. We will expand on this when we compare the results.

	\texttt{linesearch}: The parallel algorithm with \GetWeightsSub{line}. 
	Even though we expect this approach to be less computationally expensive than the \texttt{opt} strategy, one still needs to repeatedly instantiate the transport plan, which adds additional overhead specially in large problems.
	Besides, since the convex combination weights are the same for each cell, one can expect the sparsity produced by this strategy to be worse than that of \texttt{opt}.

	\texttt{staggered}: Subdivide each partition into $2^d$ batches of staggered cells, in such a way that neither partition contains adjacent composite cells (see Figure \ref{fig:subpartitions}). Then apply the \texttt{swift} strategy to each of the subpartitions. Intuitively, for small values of the regularization $\veps$ and close to convergence, very little overlap between the $Y$-marginals $\nu_J$ in each of the subpartitions is expected, and thus the cell problems become approximately independent, i.e.~the probability for triggering the \GetWeightsSub{safe} fallback is small and the greedy update will be applied more often.
	
According to Propositions \ref{prop:convergence-parallel-domdec}, \ref{prop:convergence-parallel-domdec-extended}, and \ref{prop:converging-getweights} all strategies (except for \texttt{greedy}) are guaranteed to converge.

In Figures \ref{fig:strategies-lebesgue-product-joint} and \ref{fig:strategies-lebesgue-product-scores} we compare the different proposed strategies on a toy problem with $\mu = \nu$ being a homogenous discrete measure on the equispaced grid $X = Y=\{0, \tfrac{1}{N}, ,...,1-\tfrac{1}{N}\}$, for $N = 32$. 
The composite partition $\partA$ is obtained by aggregating neighboring pairs of points into composite cells, $\partB$ is the staggered partition with two singletons $\{0\}$ and $\{1-1/N\}$ at the beginning and the end.
We set $\lambda = 1$, and $\veps = 2/N^2$ (i.e.~the blur scale $\sqrt{\veps}$ is on the order of the grid resolution).
The optimal plan will be approximately concentrated along the diagonal with a blur width of a few pixels.
As initial iterate we choose the product measure $\mu \otimes \nu$.

Figure \ref{fig:strategies-lebesgue-product-joint} shows both the short and long term behavior of the domain decomposition iterations, while \ref{fig:strategies-lebesgue-product-scores} compares the sub-optimality gap of the objective over the number of iterations, where a reference solution was computed with a global Sinkhorn algorithm with very small tolerance. 
We do not compare the runtime of the different strategies, since their performance on small problems cannot be extrapolated to the large-scale problems in Section \ref{sec:multiscale}.
In the following we briefly discuss the behavior of each strategy. 

\begin{figure}[btp]
	\centering
	\begin{equation*}
	\hspace{13mm}
	\xleftrightarrow{\hspace{13mm}\text{Short time behavior}\hspace{13mm}}
	\hspace{2mm}
	\xleftrightarrow{\hspace{13mm}\text{Long time behavior}\hspace{13mm}}
	\hspace{10mm}
	\end{equation*}
	\vskip -3mm
	\includegraphics[width=0.99\linewidth]{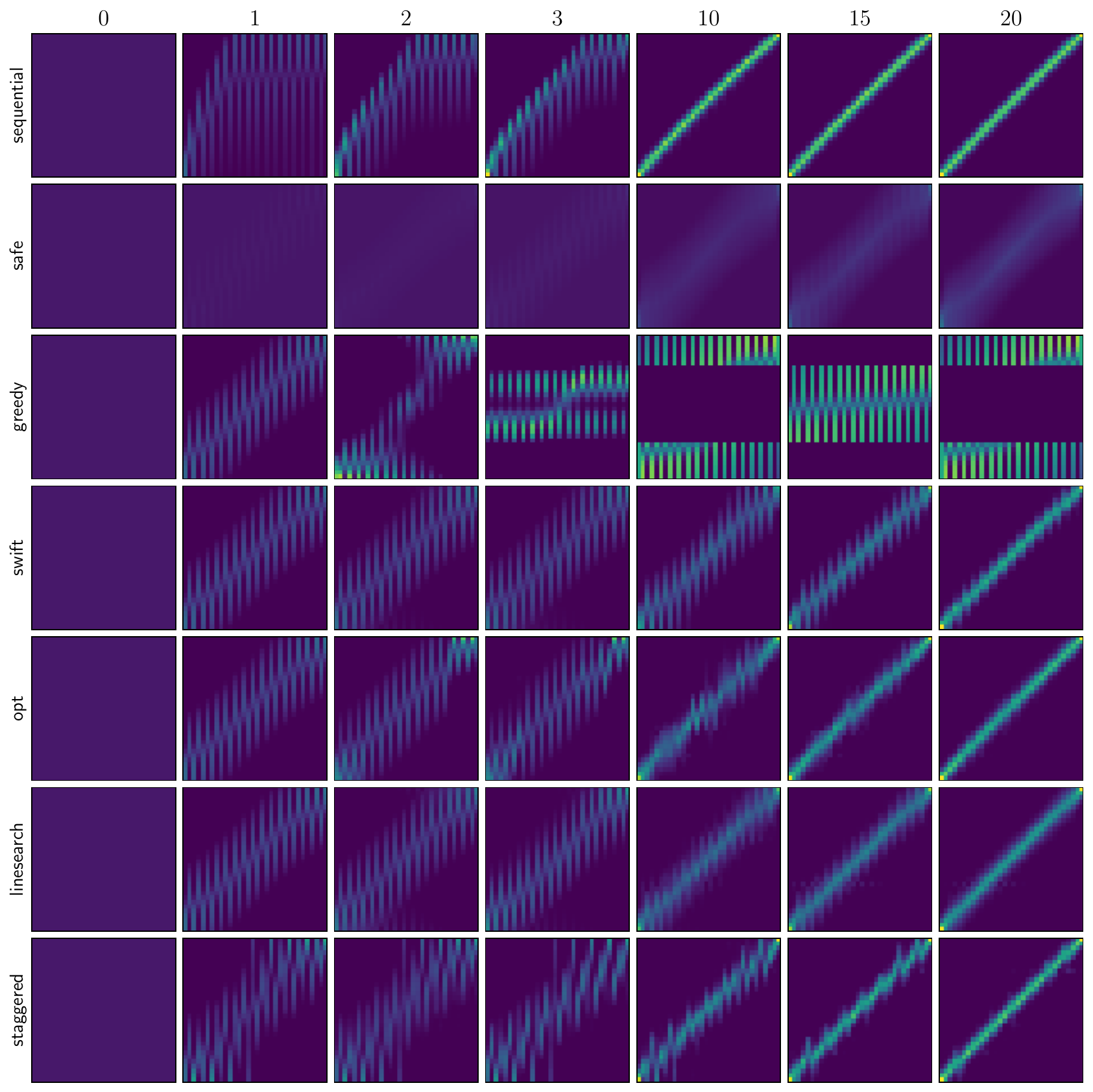}
	\caption{Behavior of the different parallelization approaches for $N = 32$. The column titles denote the iteration number $k$.
	}
	\label{fig:strategies-lebesgue-product-joint}
\end{figure}
\begin{figure}
	\centering
	\includegraphics[width=0.99\linewidth]{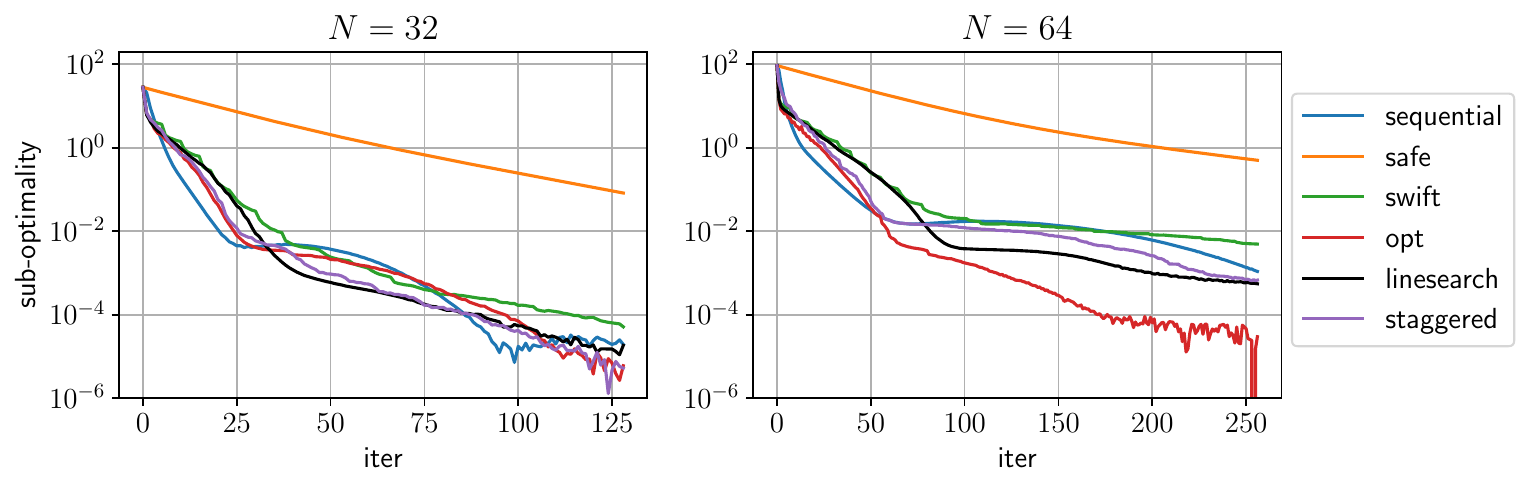}
	\caption{Evolution of the sub-optimality gap for the different parallelization approaches.
	}
	\label{fig:strategies-lebesgue-product-scores}
\end{figure}

\paragraph{\texttt{sequential}} In Figure \ref{fig:strategies-lebesgue-product-joint} we observe a rapid formation of the diagonal structure, much faster than what we would expect in the balanced case (where we need approximately $N$ iterations, see  \cite{asymptotic_domdec_springer}). 
Also unlike in the balanced case, the evolution is not symmetrical, which comes from the order in which the cell problems are solved and their interdependence.
In terms of iterations, the global score is decreased in a way that is comparable with the best parallel approaches (see Figure \ref{fig:strategies-lebesgue-product-scores}), which makes sense since each cell problem can already react to the changes by the previous cells. Of course, due to the sequential nature, a single iteration will require much more time. We attribute the slight increment of the score around iteration $k = 30$ to the inexact solution of the Sinkhorn subproblem.

\paragraph{\texttt{safe}} The small step size imposed by the \texttt{safe} approach leads to a very slow evolution of the iterates (see both figures). On larger problems this will become even worse, since the step size scales like $1/|\partGeneric|$. This renders the \texttt{safe} approach impractical for an efficient, large-scale solver. 

\paragraph{\texttt{greedy}} The \texttt{greedy} scheme leads to non-convergent behavior due to a lack of coordination between the cells. In the first domain decomposition iteration the cells create a surplus of mass in the central region of $Y$. The next iteration compensates for this imbalance, but since each subproblem acts on its own, they overshoot and create a new accumulation of mass at the extremes of $Y$. 
This oscillation pattern somewhat stabilizes into a cycle. Hence, without an additional safeguard, the \texttt{greedy} approach fails to provide a convergent algorithm.

\paragraph{\texttt{swift}} The \texttt{swift} approach is a simple way to add a safeguard to the \texttt{greedy} approach. By switching between the \texttt{greedy} and \texttt{safe} strategies based on their decrements, the algorithm becomes convergent.
However, the fallback to \texttt{safe} is rather inefficient, as can be inferred from Figure \ref{fig:strategies-lebesgue-product-scores}. Especially during the early iterations, a sharp decrement of the score (corresponding to a \texttt{greedy} iteration) is followed by several iterations with slow decrement (corresponding to \texttt{safe} iterations).

\paragraph{\texttt{opt}} As shown in Figures \ref{fig:strategies-lebesgue-product-joint} and \ref{fig:strategies-lebesgue-product-scores}, the \texttt{opt} strategy is among the fastest in terms of iteration numbers. Its main drawback is that computing its convex combination weights hinges on the solution of the auxiliary optimization problem
\begin{equation}
\min_{(\theta_J)_{J\in \partGeneric}} 
E\left(\sum_{J\in \partGeneric} (1- \theta_J) \pi_J + \theta_J\tilde{\pi}_{J} \right),
\qquad 
\tn{s.t. } 0 \le \theta_J \le 1
\tn{ for all }J\in \partGeneric.
\label{eq:opt-weights-problem}
\end{equation}
Although \eqref{eq:opt-weights-problem} is a convex optimization problem, solving it in every iteration adds considerable overhead due to the need to instantiate the global plan for different choices of $(\theta_J)_J$.

Furthermore, as shown in Figure \ref{fig:strategies-lebesgue-product-sparsity}, the \texttt{opt} strategy features a slower reduction in the number of non-negligible plan entries compared to the \texttt{staggered} strategy.

\begin{figure}[h]
	\centering
	\includegraphics[width=\linewidth]{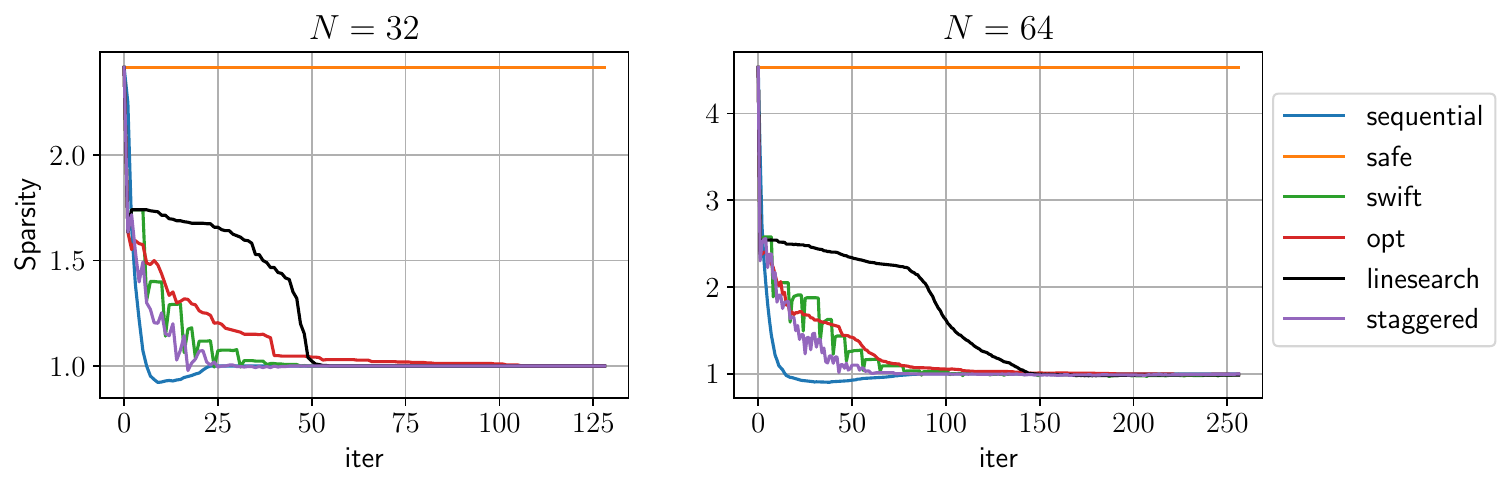}
	\caption{Evolution of the coupling sparsity, which we define as the number of plan entries above the truncation threshold $\textsc{Thr} = 10^{-14}$, normalized to the number of entries (above the truncation threshold) of the optimal plan.
	}
	\label{fig:strategies-lebesgue-product-sparsity}
\end{figure}

\paragraph{\texttt{linesearch}}
The \texttt{linesearch} strategy yields one of the best performances in Figure \ref{fig:strategies-lebesgue-product-scores}, close to the \texttt{opt} strategy and slightly better than \texttt{staggered} in this problem regime.
However, this approach again involves the repeated instantiation of the transport plan, which for large problem sizes adds considerable overhead. Furthermore, as shown in Figure \ref{fig:strategies-lebesgue-product-sparsity}, the number of non-negligible entries decays very slowly, as a consequence of \GetWeightsSub{line} assigning the same convex combination weights to all cell problems. 
This will result in a degradation of performance as the problem size increases.

\paragraph{\texttt{staggered}} 
As exemplified by Figures \ref{fig:strategies-lebesgue-product-joint} and \ref{fig:strategies-lebesgue-product-scores}, the \texttt{staggered} strategy achieves fast, consistent decrements without relying on auxiliary optimization problems nor excessive calls to the \texttt{safe} strategy.
This is achieved by the division of each partition into staggered batches, which in practice greatly reduce the overlap between cell $Y$-marginals (after some initial iterations). 

The \texttt{staggered} strategy is also among the fastest in terms of reducing the number of non-negligible plan entries (cf. Figure \ref{fig:strategies-lebesgue-product-sparsity}), only outperformed by the \texttt{sequential} strategy.

Although the \texttt{opt} and \texttt{linesearch} strategies may provide faster decrements in terms of iteration numbers, the \texttt{staggered} strategy avoids expensive plan instantiation and shows an excellent performance in providing a sparse plan, so we consider it to be preferable, in particular on larger problems. 

\subsection{Using only single partition}
\label{sec:one-partition}

As pointed out in Remark \ref{rk:only-one-partition-needed}, domain decomposition for unbalanced optimal transport only requires one partition for convergence, in contrast with the balanced case that requires two partitions fulfilling a certain joint connectivity property \cite{BoSch2020}.
Due to this requirement in the balanced problem, we expect that using two partitions will prove advantageous also in unbalanced problems.
The intuitive reason is that employing two, interleaved partitions allows mass to `flow' from one cell to another as the iterations progress. Conversely, when restricted to a single partition the only mechanism available for mass exchange involves destroying mass in one cell and creating it in another. However, since the cell iterations are independent, different cells cannot coordinate this procedure, and we expect the ensuing convergence to be slow compared to the case of interleaved partitions.

We test this intuition in Figure \ref{fig:strategies-lebesgue-product-joint-npart-1}, where we show the behavior of the domain decomposition algorithm using a single partition for a selection of the parallelization strategies. The respective evolution of the suboptimality gap is shown in Figure \ref{fig:strategies-lebesgue-product-scores-npart-1}.
As expected, using a single partition slows down convergence noticeably. 
This effect becomes stronger as the iterations progress, since close to convergence incentives for mass creation or destruction become smaller. 
We conclude that employing two interleaved partitions has beneficial effects on the convergence of the domain decomposition algorithm even in the unbalanced case, and hence we employ this partition structure in Section \ref{sec:multiscale}.

\begin{figure}[btp]
	\centering
	\includegraphics[width=0.99\linewidth]{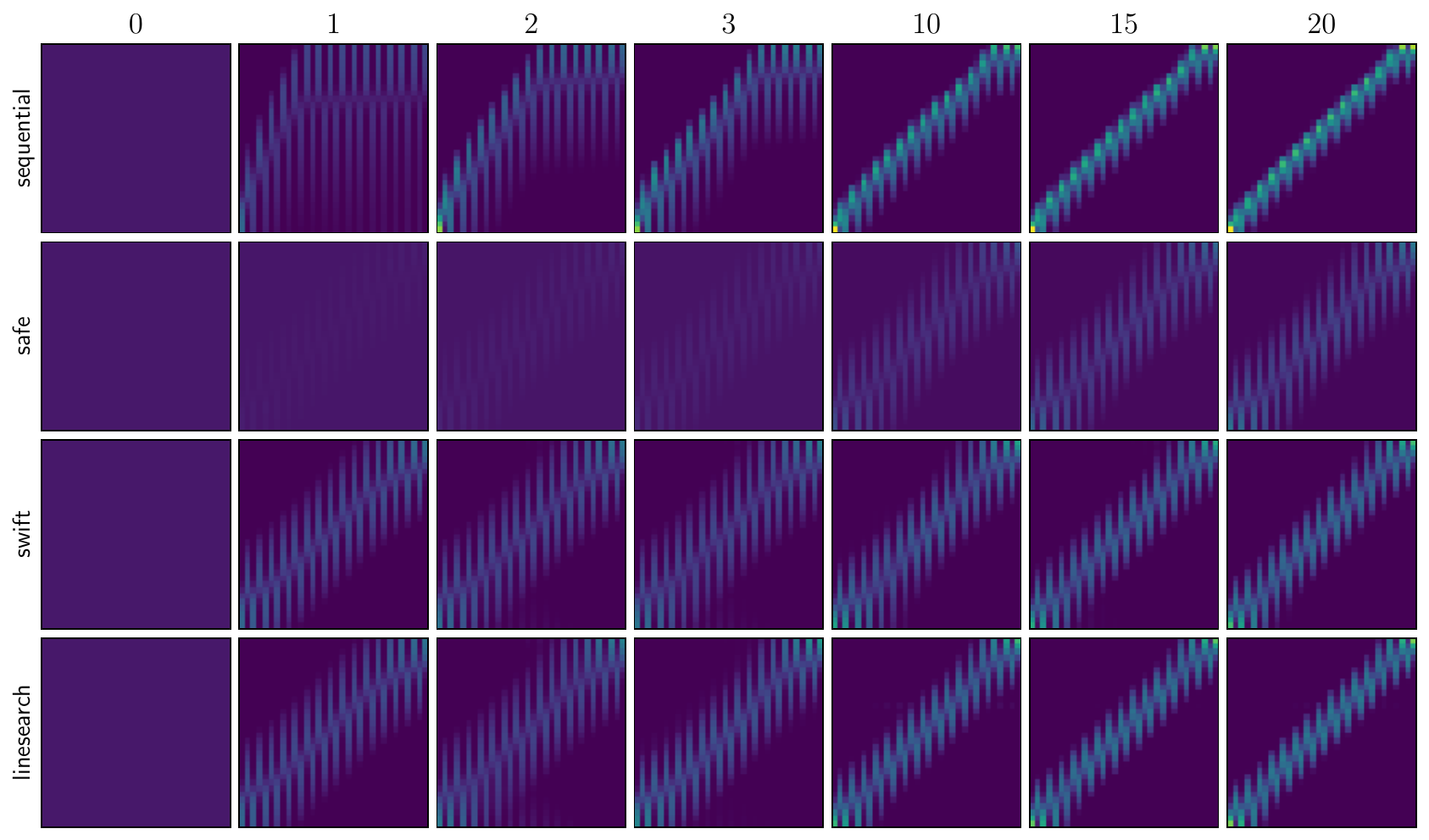}
	\caption{Behavior of the different parallelization approaches for $N = 32$, using a single partition. The column titles denote the iteration number $k$.
	}
	\label{fig:strategies-lebesgue-product-joint-npart-1}
\end{figure}

\begin{figure}
	\centering
	\includegraphics[width=0.99\linewidth]{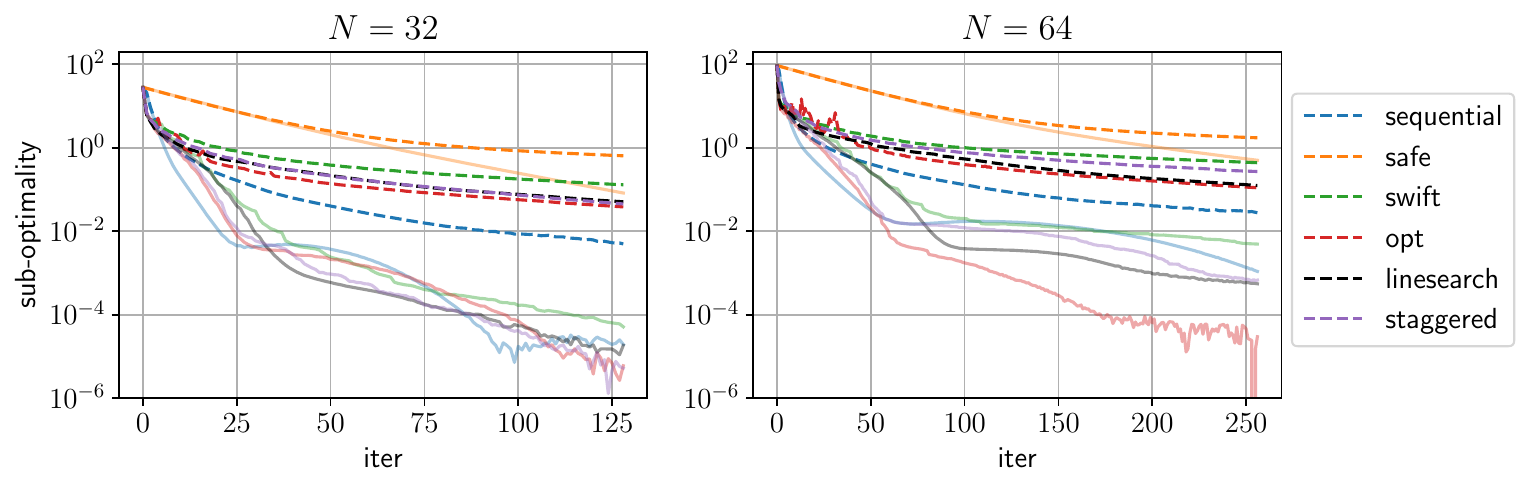}
	\caption{Evolution of the sub-optimality gap for the different parallelization approaches, using a single partition. 
	The corresponding results using two partitions (see Figure \ref{fig:strategies-lebesgue-product-scores}) are shown in faded colors and continuous lines.
	}
	\label{fig:strategies-lebesgue-product-scores-npart-1}
\end{figure}

\subsection{Large scale examples}
\label{sec:multiscale}

\subsubsection{Setup}
\label{sec:numerics-setup}

\paragraph{Compared algorithms, implementations, and hardware} In this Section we compare unbalanced domain decomposition to the global unbalanced Sinkhorn algorithm.
Our implementation for unbalanced domain decomposition is based on the GPU implementation of balanced domain decomposition described in \cite[Appendix A]{hybrid}\footnote{Code available at \url{https://github.com/OTGroupGoe/DomainDecomposition}.}. 
Its main features are sparse storage of cell marginals $(\nu_i)_{i\in I}$ in a geometric `bounding box structure', and a Sinkhorn cell solver that is tailored specifically for handling a large number of small problems in parallel.
For the global Sinkhorn solver we also use the implementation discussed in \cite{hybrid}, which compares favorably to \texttt{keops/geomloss} \cite{FeydyDissertation} in the range of problem sizes considered (see \cite[Figure 10]{hybrid}).
In the following we will refer to these two algorithms as \DomDecGPU\ and \SinkhornGPU, respectively.

The experiments were run on an Intel Xeon Gold 6252 CPU with 24 cores (we use 1) and an NVIDIA V100 GPU with 32 GB of memory.

\paragraph{Stopping criterion and measure truncation} 
We use \eqref{eq:PD-gap-KL} divided by $\lambda$ (see discussion above) to measure the suboptimality error of the Sinkhorn iterations.
As error threshold we use $||\mu_\partGeneric|| \cdot \tn{Err}$, where for \DomDecGPU\ $\mu_\partGeneric$ is the collection of the $X$-marginals of the cell problems the cell Sinkhorn solver is handling simultaneously, and for \SinkhornGPU\ is simply $\mu$. We use $\tn{Err} = 2\cdot 10^{-5}$, which yields a good balance between accuracy and speed. 

In \DomDecGPU, the basic cell marginals $(\nu_i)_{i\in I}$ are truncated at $10^{-15}$ and stored in bounding box structures (see \cite[Appendix]{hybrid} for more details).

\paragraph{Test data} We use the same problem data as in \cite{BoSch2020} and \cite[Appendix]{hybrid}: images with dimensions $N \times N$, with $N=2^\ell$ and $\ell$ ranging from $\ell=6$ to $\ell = 11$, i.e.~images between the sizes $64 \times 64$ to $2048 \times 2048$. The images are Gaussian mixtures restricted to $X = [0,1]^2$ with random variances, means, and magnitudes. For each problem size we generated 10 test images, i.e.~45 pairwise transport problems, and average the results. 
An example of the problem data is shown in Figure \ref{fig:ExampleImages}. 

\begin{figure}[hbt]
	\centering
	{\def\imgw{2.4cm}%
		\begin{tikzpicture}[x=\imgw,y=\imgw,img/.style={inner sep=0pt,draw=black,line width=1pt,anchor=north west}]
		\begin{scope}[shift={(0,-1.1)}]
		\foreach \x/\y/\l in {0/0/3,1/0/4,2/0/5,3/0/6,4/0/7,5/0/8} {
			\node[img] at (1.1*\x,-1.1*\y)[label=below:{\small $\ell=\l$}]{\includegraphics[width=\imgw]{muY_l\l.png}};
		}
		\end{scope}
		\end{tikzpicture}%
	}%
	\caption{A marginal with size $256 \times 256$ shown at different subsampling layers.}
	\label{fig:ExampleImages}
\end{figure}

\paragraph[Multi-scale and eps-scaling]{Multi-scale and $\veps$-scaling} All algorithms implement the same strategy for multi-scale and $\veps$-scaling as in \cite[Section 6.4]{BoSch2020}. At every multiscale layer, the initial regularization strength is $\varepsilon = 2(\Delta x)^2$, where $\Delta x$ stands for the pixel size, and the final value is $\varepsilon = (\Delta x)^2 / 2$. In this way, the final regularization strength on a given multiscale layer coincides with the initial one in the next layer. On the finest layer $\varepsilon$ is decreased further to the value $(\Delta x)^2 / 4$ implying a relatively small residual entropic blur.

\paragraph{Double precision}
All algorithms use double floating-point precision, since single precision causes a degradation in the accuracy for problems with $N \gtrsim 256$ and small $\veps$. For more details see \cite[Appendix]{hybrid}, where this behavior was originally reported.

\begin{figure}
	\centering
	\includegraphics[width=0.99\linewidth]{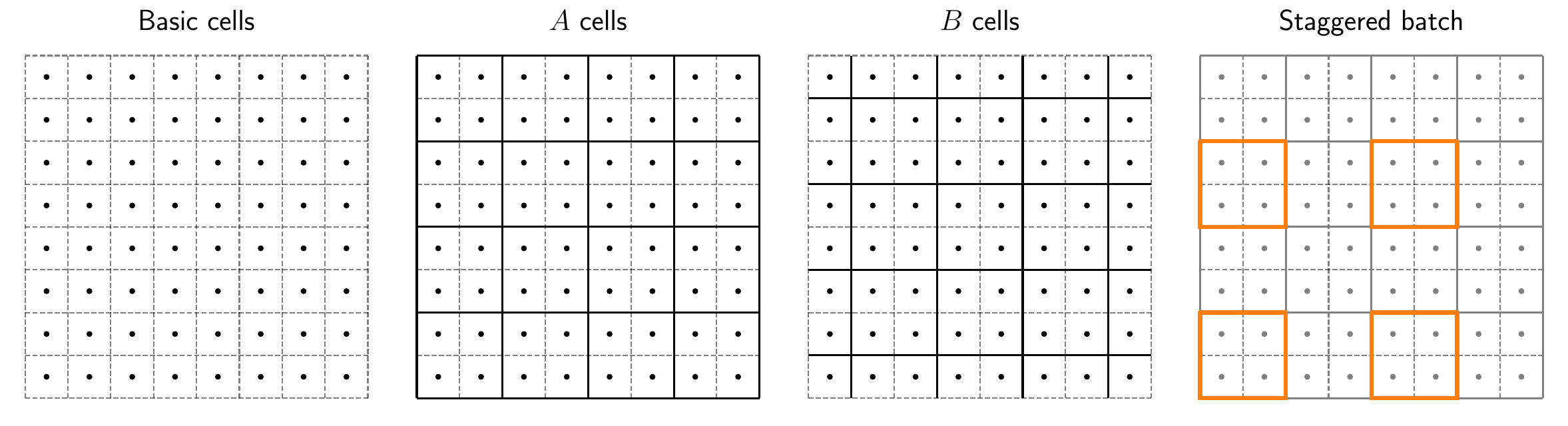}
	\caption{Partition structure and staggered batches.}
	\label{fig:subpartitions}
\end{figure}

\paragraph{Basic and composite partitions} 
We generate basic and composite partitions as in \cite{BoSch2020}: for the basic partition we divide each image into blocks of $s\times s$ pixels (where $s$ is a divisor of the image size), while composite cells are obtained by grouping cells of the same size, as shown in Figure \ref{fig:subpartitions}.
For the $B$ cells we pad the image boundary with a region of zero density, such that all composite cells have the same size, to simplify batching.
As in \cite{BoSch2020} and \cite{hybrid}, we observe experimentally that the choice of cell size $s=4$ yields the best results.

\paragraph{\texttt{staggered} strategy and batching}
Based on Section \ref{sec:single-scale} we use the \texttt{staggered} strategy for parallelizing the unbalanced cell problems. 
Parallelizing the cell problems in batches of four staggered grids (see Figure \ref{fig:subpartitions}) reduces the overlap between their $Y$-marginal supports and thus increases the chances of accepting the greedy iteration.
In practice we are somewhat lenient when accepting the greedy solution, since the finite error in the cell Sinkhorn stopping criterion can sometimes lead to small increases in the objective. We allow for an increase of up to 0.5\% in the primal score without triggering the safeguard. For large values of $\lambda$ it might be practical to further increase this threshold since small deviations in the marginal lead to strong increases in the score (see also the discussion on the balancing step in Section \ref{sec:implementation}).

\paragraph{Sinkhorn warm start} 
A drawback of the \texttt{staggered} strategy is that one is forced to use at least $2^d$ batches, which causes considerable overhead on small problems. 
For this reason, even in \DomDecGPU\ we solve the first multiscale layers with the global Sinkhorn solver, and only switch to domain decomposition starting at layer  $\ell = 8$ (for $N \ge 512$), or $\ell = \floor{\log_2(N)}-1$ (for $N < 512$) (i.e., for small problems only the last layer is solved with domain decomposition). This combines the efficiency of GPU Sinkhorn solvers in small-to-medium sized problems with the better time complexity of domain decomposition for larger problems.
The tolerance of the lower resolution Sinkhorn solver is set to one forth of the domain decomposition tolerance, with the objective of minimizing the mismatch between $\proj_X \pi$ and the optimal first marginal, since as explained in \cite{BoSch2020} and Section \ref{sec:implementation} domain decomposition suffers from balancing problems when the first marginal is not kept close to optimal.

\subsubsection{Results}

\paragraph[Influence of lambda]{Influence of $\lambda$}
Figure \ref{fig:deformation-map} illustrates the influence of $\lambda$ on the optimal transport map for an example problem. For very small $\lambda$ mass is barely transported since the marginal discrepancy in the $\KL$-terms is very cheap. As $\lambda$ increases, more transport occurs and we asymptotically approach the balanced problem.

\begin{figure}
	\centering
	\includegraphics[width=0.99\linewidth]{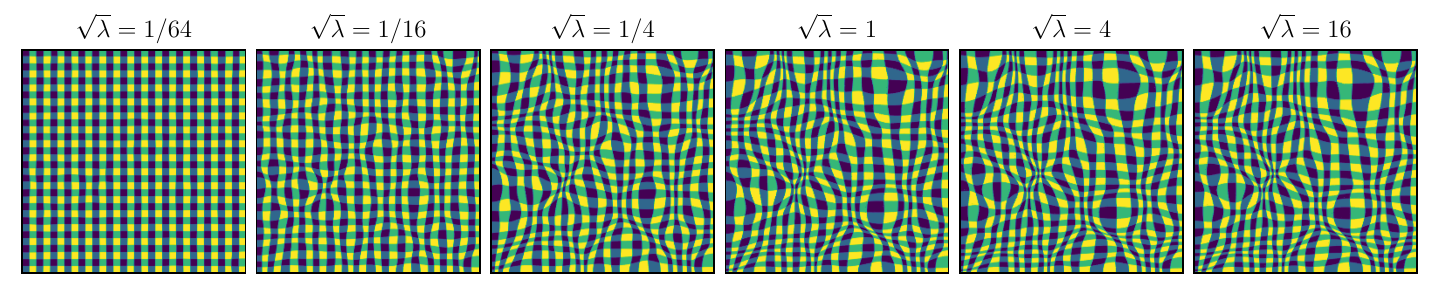}
	\caption{Influence of $\lambda$ in the transport plan. We color the density of each basic cell $Y$-marginal with respect to the total marginal, i.e.  $\nu_i / \proj_Y \pi$, with a color depending on its position in its respective $A$-composite cell. 
		The resulting grid pattern represents the target locations of mass in the basic cells $X_i$ according to the optimal transport plan. For small $\lambda$, mass is barely moving (the target grid is very close to the regular initial grid). As $\lambda$ increases, the grid is deformed further and further.
	}
	\label{fig:deformation-map}
\end{figure}

\paragraph{Runtime}
Figure \ref{fig:diff-time-lambda} shows the runtime of the numerical experiments, while Table \ref{tab:benchmark} reports subroutine runtime and solution quality for $\lambda = 1$.
We summarize the performance over all test problems by the median, since it is more robust to a few outliers where we observed convergence issues (see paragraph further below).

In Figure \ref{fig:diff-time-lambda}, left we can observe a steady increase in runtime for the \SinkhornGPU\ solver, followed by a sharp decline when certain value of $\lambda$ is crossed. 
This is consistent with the behavior shown in Figure \ref{fig:convergence-plateau}; the slightly different dependence in $\lambda$ is due to the fact that we are now using a multiscale solver with $\veps$-scaling, instead of the fixed scale and $\veps$ used in Figure \ref{fig:convergence-plateau}.
The runtime for \DomDecGPU\ is more consistent, with small values of $\lambda$ appearing to be the most challenging regime.
This can be explained by the fact that for small $\lambda$ we expect more non-local changes in mass, for which domain decomposition is not as well-suited as a global Sinkhorn solver.
Intermediate values of $\lambda$ show the largest benefit of using \DomDecGPU\ instead of \SinkhornGPU.

For very large $\lambda$ we are essentially solving a balanced problem (as exemplified by Figure \ref{fig:deformation-map}).
In this regime \DomDecGPU\ outperforms \SinkhornGPU\ on large problems, although it does not match the speed-up reported for balanced transport in \cite{hybrid}. This is mainly due to two reasons:
First, the unbalanced algorithm requires several steps that cause additional overhead, such as the Newton iteration in the Sinkhorn subsolver (see \eqref{eq:sinkhorn-Y-iteration} and discussion below), and the calculation of the global $Y$-marginal and $\nu_{-J}$ (we will comment on this further below). 
Besides, the balanced problem allows for a series of performance tricks (e.g.~batches of arbitrary size, clustering of the composite cells' sizes to maximize the efficiency of the Sinkhorn iteration) that are hard to adapt to unbalanced \DomDecGPU\ when employing the \texttt{staggered} parallelization strategy.

Figure \ref{fig:diff-time-lambda}, center shows the runtime in terms of $N$ for fixed $\lambda$. 
The scaling in $N$ of \DomDecGPU\ and \SinkhornGPU\ agrees well with that reported in \cite{hybrid} for the balanced case, with \DomDecGPU\ featuring a larger constant overhead for the reasons commented above, but a slower increase with $N$. Hence for large $N$ \DomDecGPU\ is faster than \SinkhornGPU.

\begin{figure}[h]
	\includegraphics[width=0.99\linewidth]{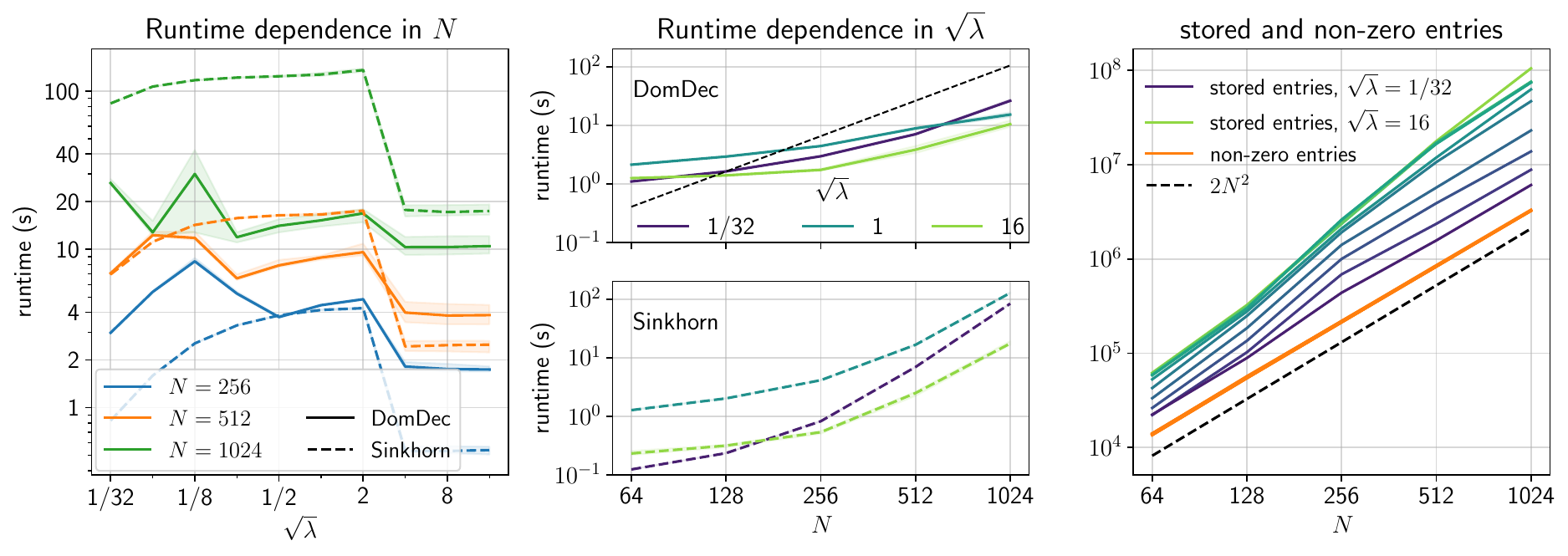}
	\caption{Left, Runtime comparison between \DomDecGPU\ and \SinkhornGPU, for fixed values of $N$. The solid line and shading represent the median and the 0.25 and 0.75 quantiles. Center, an analog comparison focusing on the dependence on $\lambda$. The dashed line is proportional to the marginal size, i.e.~$N^2$. Right, Number of non-zero entries and stored entries for different values of $N$ and $\lambda$. The interquantile range is not shown for the sake of clarity, but we observe it increases in magnitude with $\lambda$ until reaching the range portrayed in \cite{hybrid} for the balanced case.}
	\label{fig:diff-time-lambda}
\end{figure}

In Table \ref{tab:benchmark} we report more detailed statistics about the runtime and solution quality for $\lambda = 1$.
We observe that the Sinkhorn warm start ---used to solve the first multiscale layers--- claims a sizable share of the total runtime. Of the remaining time (when domain decomposition is running) about 60\% corresponds to the subproblem solving, 10\% to the computation of $\nu_{-J}$, about 8\% for manipulation of the sparse structure containing the cell $Y$-marginals $(\nu_i)_{i\in I}$, and another 8\% to the balancing procedure. Refinement, batching and other minor operations only account for a negligible fraction.
Within the solution of the cell subproblems, the Newton subroutine required to compute the $Y$-iteration (cf.~\eqref{eq:sinkhorn-Y-iteration} and discussion below) claims 47\% of the subproblem solving time, even after implementing it directly in CUDA.
It would be interesting to investigate if this routine can be improved further.

\begin{table}
	\caption{Summary of the median performance of \DomDecGPU\ and \SinkhornGPU\ and subroutines for $\lambda = 1$, over a total of 45 problem instances per $N$ (see Section \ref{sec:numerics-setup}).}
	\centering
	\small
\begin{tabular}{lrrrrr}
	\toprule
	image size  &    64$\times$64   &    128$\times$128  &    256$\times$256  &    512$\times$512  &     1024$\times$1024 \\
	\midrule
	\multicolumn{6}{l}{total runtime (seconds)}
	\\
	\midrule
	\DomDecGPU &     2.14 &     2.94 &     4.44 &    8.88 &     15.3 \\
	\SinkhornGPU&     1.28 &     2.03 &     4.14 &    16.6 &      127 \\ 
	\midrule
	\multicolumn{6}{l}{time spent in \DomDecGPU's subroutines (seconds)}
	\\
	\midrule
	warm up \SinkhornGPU & 0.915 &     1.78 &     3.06 &    6.14 &     6.21 \\
	Sinkhorn subsolver &  0.675 &    0.552 &    0.642 &    1.45 &     5.45 \\
	bounding box processing &    0.225 &    0.235 &     0.250 &   0.307 &     0.770 \\
	compute $\nu_{-J}$ &     0.120 &    0.121 &    0.128 &   0.183 &    0.883 \\
	\midrule
	\multicolumn{6}{l}{\DomDecGPU\ solution quality}
	\\
	\midrule
	$\LL^1$ marginal error $Y$ &  2.1e-04 &  8.6e-05 &  1.9e-05 & 4.2e-06 &  4.7e-06 \\
	$\LL^1$ marginal error $X$  &  5.5e-03 &  3.9e-03 &  3.4e-03 & 3.2e-03 &  3.2e-03 \\
	relative primal-dual gap &  1.0e-03 &  2.1e-03 &  3.7e-03 & 2.4e-03 &  3.5e-03 \\
	\midrule
	\multicolumn{6}{l}{\SinkhornGPU\ solution quality}
	\\
	\midrule
	$\LL^1$ marginal error $X$  &  6.3e-03 &  6.3e-03 &  6.3e-03 & 6.3e-03 &  6.3e-03 \\
	relative primal-dual gap &  1.1e-03 &  2.4e-03 &  4.5e-03 & 2.7e-03 &  3.7e-03 \\
	\bottomrule
\end{tabular}

	\label{tab:benchmark}
\end{table}

\paragraph{Issues with non-convergence}
In about 1\% of cell problems the subsolver in \DomDecGPU\ failed to reach the prescribed error goal and terminated before convergence.
This happened in cases where $\lambda$ was large and it is connected to the issue described in Figure \ref{fig:convergence-plateau}.
It was possible to restore convergence by further decreasing the tolerance of the warm start Sinkhorn, which indicates that the problem is connected to mass discrepancies in the initial plan. 

The \texttt{safe} safeguard was only triggered in the  1\% of failing instances mentioned above. In the other cases the greedy stage of the \texttt{staggered} strategy worked without issues. 
If the threshold for the \texttt{safe} method is lowered (which increases the frequency of its calls) more problems outside this 1\% cohort invoke the \texttt{safe} method at some stage. 
This does not hinder the Sinkhorn subsolver, but causes that more domain decomposition iterations are needed for convergence, since the \texttt{safe} method imposes a rather small step size.
Therefore, we recommend to use the algorithm with the settings described above and to only increase the precision goal for the Sinkhorn warm start when a problem with convergence occurs.

\paragraph{Solution quality}

Since the unbalanced optimality conditions \eqref{eq:optimality-conditions-KL} tend to the balanced optimality conditions as $\lambda \to \infty$, we call the $\LL^1$ difference between $\proj_X \pi$ and $e^{-\alpha/\lambda}\mu$ the \emph{$X$-marginal error}, and define the $Y$-marginal error analogously. Table \ref{tab:benchmark} gives the $X$-marginal error for \DomDecGPU\ and \SinkhornGPU\ and the $Y$-marginal error for \DomDecGPU\ (since \SinkhornGPU\ ends in a $Y$-iteration, its $Y$-marginal error is zero). We observe a similar $X$-marginal error for \DomDecGPU\ and \SinkhornGPU, with \DomDecGPU\ being slightly better. The $Y$-marginal error for \DomDecGPU\ is typically several orders of magnitude smaller than the $X$-marginal error. 

Table \ref{tab:benchmark} also reports the relative primal-dual gap, resulting from dividing the primal-dual gap \eqref{eq:PD-gap} by the dual score. Again the solution quality is very similar for \DomDecGPU\ and \SinkhornGPU. We observe an analogous trend in marginal error and solution quality for the other values of $\lambda$.

\paragraph{Sparsity}
As shown in Figure \ref{fig:diff-time-lambda}, right, the number of non-zero entries grows approximately linearly with the image resolution $N^2$, with an essentially identical behavior for all values of $\lambda$. This is similar to the balanced case as reported in \cite{BoSch2020,hybrid}.

On the other hand, the required memory increases with $\lambda$ up to the level reported in \cite{hybrid} for balanced domain decomposition on GPUs. This can be understood with the help of Figure \ref{fig:deformation-map}: for small $\lambda$ there is little actual transport happening and the support of the basic cell $Y$-marginals $(\nu_i)_i$ is close to that of the $X$-marginals $(\mu_i)_i$. 
As a result, the bounding box structure stores the cell $Y$-marginals quite efficiently, since their support is close to being square-shaped.
As $\lambda$ increases, the shapes of the cell $Y$-marginals become more complex and diverse as in the balanced problem, and the bounding box storage becomes less efficient. 

This trend is currently the limiting factor for further increasing the problem size in \DomDecGPU. 
We expect that this could be improved by more sophisticated storage methods for the cell $Y$-marginals, such as alternating between a box-based representation for the subproblem solving (enhancing the efficiency of the Sinkhorn iterations) and a more classical sparse representation for storage between iterations. 

\paragraph{Conclusion} 
Previous work has demonstrated the efficiency of domain decomposition for balanced optimal transport. 
Generalizing domain decomposition to unbalanced transport involved addressing both theoretical and numerical challenges.
Our numerical experiments show that unbalanced domain decomposition compares favorably to a global Sinkhorn solver for a wide range of values of $\lambda$ in medium to large problems. 
An important task for future work is an improved memory efficiency for the bounding box data structure and a faster inner Newton step in the Sinkhorn iteration.

\bibliography{manuscript}{}

\end{document}